\numberwithin{equation}{section}
\newtheorem{theorem}{Theorem}[section]
\newtheorem{corollary}[theorem]{Corollary}
\newtheorem{lemma}[theorem]{Lemma}
\newtheorem{proposition}[theorem]{Proposition}
\newtheorem{definition}[theorem]{Definition}
\newtheorem{example}[theorem]{Example}
\newtheorem{remark}[theorem]{Remark}
\newcommand{\bC}{\mathbb{C}}
 \DeclareMathOperator{\q}{q}
\begin{document}

\title[Riemannian geometry on Hom-$\rho$-commutative algebras]
 {Riemannian geometry on Hom-$\rho$-commutative algebras}

\bibliographystyle{amsplain}

\author[Zahra Bagheri and Esmaeil Peyghan]{Z. Bagheri and  E. Peyghan}
\address{Department of Mathematics, Faculty of Science, Arak University,
Arak, 38156-8-8349, Iran.}
\email{z-bagheri@phd.araku.ac.ir, e-peyghan@araku.ac.ir}


\keywords{ Extended hyper-plan, Hom-$\rho$-commutative algebra,  $\rho$-Poisson bracket}

\subjclass[2010]{53B05, 53B21, 53C05}


\begin{abstract}
Recently, some concepts such as Hom-algebras, Hom-Lie algebras, Hom-Lie admissible algebras, Hom-coalgebras are studied and some classical properties of algebras and some geometric objects are extended on them. In this paper by recall the concept of Hom-$\rho$-commutative algebras, we intend to develop some of the most classical results in Riemannian geometry such as metric, connection, torsion tensor, curvature tensor on it and also we discuss about differential operators and get some results of differential calculus by using them. The notions of symplectic structures and Poisson structures are included and an example of $\rho$-Poisson bracket is given.
\end{abstract}

\maketitle







\section{Introduction}
One branch of differential geometry is Riemannian geometry that studies Riemannian manifolds (a smooth manifold with a Riemannian metric) (see \cite{S t} for more details). Riemannian geometry at the first was
brought up by Bernhard Riemann in the nineteenth century. The concept of linear connection is one of the
main concepts of the Riemannian geometry, which arose by the idea of parallel transport along a path in a Riemannian manifold at the end of 19th century. (see \cite{S H}, \cite{R W}). There is no direct and quick way to
companion between distance points of a curve space,  however the
connection permits to contrast what is happening at these points.
Earlier, in the 1910's, Albert Einstein discovered that the Riemannian geometry is substantial to general relativity theory. It is also the foundational revelation for gauge theories. This division into two branches has led to many representations tending to either the specific (e.g presented in tensor notation assuming a coordinate frame and zero torsion) or
the abstract (e.g using the language of fiber bundles). By worth of its applications the Riemannian geometry stands at the nucleus of modern mathematics.

Differential calculus is a branch of mathematics concerned with the determination, properties, and application of derivatives and differentials in study of functions. The development of differential calculus is closely dealing with  the concept of integral calculus. In this approach the differential calculus on the manifold is deduced from the properties of the manifold and it involves functions on the manifold, differential operators,
differential forms and derivatives. If we denote by $d$ the exterior derivative on differential forms, then the operator $d$ satisfies $d^2 = 0$ that is an important property of this calculus.

Definition of a multiplication over a vector space was the origin of the notion of Hom-algebra structure, where the structure twisting by a homomorphism. The structure of Hom-Lie algebra appeared first as a generalization of Lie algebra by Hartwig, Larsson and Silvestrov in \cite{J D S}. Physics and deformations of Lie algebras, in particular Lie algebra of vector fields were the stimulants to study Hom-Lie structures. Lie algebras are special cases of Hom-Lie algebras in which $\phi$ is the identity map. Also, $q$-deformations of the Witt and the Virasoro algebras have the structure of a Hom-Lie algebra. Later Hom-Lie algebras were extended to Hom-associative algebras by Makhlouf and Silverstrov in \cite{A S} and to quasi-Hom Lie and quasi-Lie algebras by Larsson and Silvestrov in \cite{L S}, \cite{L S S}. Leibniz and Hom-Lie admissible algebras, Hom-alternative algebras, Hom-Hopf algebras, Hom-coalgebras  are other interesting Hom-algebraic structures were studied \cite{M, M S, Y}.

Non-commutative geometry is a branch of mathematics concerned with a geometric approach to non-commutative algebras, and with the construction of spaces that are presented by non-commutative algebras of functions. Extension of the
concept of differential forms on manifolds plays the basic role in non-commutative geometry (see \cite{C2}, \cite{D1}, \cite{M2}, \cite{NML}, for instance). Important examples of non-commutative geometry are  $\rho$-commutative algebras. They have a great ability to generalize geometric objects. Accordingly, Riemannian geometry and its objects such as metric, connection, curvature, torsion, differential form and also differential calculus and application to hyperplane are discussed on $\rho$-commutative algebra by Bongaarts, Ciupala and Ngakeu in \cite{C}, \cite{BP1} and \cite{N}.
In this paper we recall and study Hom-$\rho$-commutative algebra and
develop some of the most classical results in Riemannian geometry and differential calculus on it.

This paper is arranged as follows. In Section 2, we recall some necessary background knowledge including $\rho$-commutative and
Hom-$\rho$-commutative algebras, Hom-associative and Hom-$\rho$-commutative Lie algebras. In Section 3, we define $p$-forms and wedge product on Hom-$\rho$-algebras and the reader will get some important properties of $\rho$-tensor products, this section further develops the foundational topics for Riemannian manifolds, metric, connection, torsion tensor and curvature tensor are included. Also, we check some examples, properties and lemmas to obtain important results. Section 4 has been assigned to discuss about representations, cochain, Hom-cochain and some results will be derived of differential calculus. Also, symplectic structures and Poisson brackets are studied in this section.


\section{Hom-$\rho$-commutative algebra}
In this section, we summarize some definitions concerning $\rho$-commutative and Hom-$\rho$-commutative algebras
and related results.

Let $A$ be an associative and unital algebra over a field $k$ ($k = \mathbb{R}$ or $k = \mathbb{C}$), grading by an abelian group $(G, +)$ that is  the vector space $A$ has a $G$-grading $A = \oplus_{a\in G}A_a$
such that $A_aA_b\subset A_{a+b}$. A map
$\rho:G\times G\longrightarrow k^{\star}$ is called a two-cycle if the following conditions hold
\begin{align}
&\rho(a,b) =\rho(b,a)^{-1},\quad a,b\in G,\\
&\rho(a+b,c) =\rho(a,c)\rho(b,c),\quad a,b,c\in G.
\end{align}
The above conditions say that 
$\rho(a,b)\neq 0$, $\rho(0,b)=1$ and $\rho(c,c)=\pm 1$ for all $a,b,c\in A$ , $c\neq 0$.

The $\rho$-commutator of two homogeneous elements $f$ and $g$ of $A$ is
\begin{equation}\label{111}
[f, g]_{\rho} = fg-\rho(|f|, |g|)gf,
\end{equation}
where $|f|$ is the $G$-degree of a (non-zero)  homogeneous element $f\in A$ and the set of homogeneous elements in $A$ is denoted by
 $Hg(A)$.\\
A $\rho$-commutative algebra is a $G$-graded algebra $A$ with a given two-cycle
$\rho$ such that $f g =\rho(| f |, |g|)g f $ for all homogeneous elements $ f$ and
$g$ in $A$ (i.e., $[f,g]_{\rho}=0$).\\
In the following, we have some preliminary definitions from \cite{AAM, I B}:
\begin{definition}
A Hom-$\rho$-algebra is a quadruple
$ (A, \cdot, \rho,\phi)$ 
consisting of a  $G$-graded
vector space
 $A$,
 i.e., $A =\bigoplus_{a\in G} A_a$,
 an even bilinear map
 $ \cdot : A \times A \longrightarrow A$ 
 i.e.,
 $A_a\cdot A_b \subseteq A_{a+b}$, 
for all 
$a,b \in G$,
a two-cycle
 $\rho : G\times G\longrightarrow k^{\ast}$
 and an even linear map
$\phi : A\longrightarrow A$.
 In addition if
 $f \cdot g=\rho (\vert f\vert ,\vert g\vert)g\cdot f$,
  for any
 $f,g\in Hg(A)$, we have a Hom-$\rho$-commutative algebra.
\end{definition}
A Hom-$\rho$-algebra $(A,\cdot,\rho,\phi)$ is called a Hom-associative $\rho$-algebra if
$$\phi(f)(g\cdot h) = (f\cdot g) \phi(h).$$
Moreover if $fg=\rho(|f|,|g|)gf$, $(A,\cdot,\rho,\phi)$ is called Hom-associative $\rho$-commutative algebra.\\
A Hom-$\rho$-algebra  $(A, \cdot,\rho,\phi)$ is said to be multiplicative if $\phi$ is a morphism for $\cdot$, regular if $\phi$ is an automorphism for $\cdot$, and involutive if $\phi^2 = Id_A$ (see \cite{AAM,I B}).
\begin{example}\label{13}
The quaternion algebra $H$ is a $\mathbb{Z}_2\times\mathbb{Z}_2\times\mathbb{Z}_2$-graded algebra in the following sense. Associate the " Triple degree" to the standard basis elements of H
$$\varepsilon=(0,0,0), i=(0,1,1), j=(1,0,1),k=(1,1,0),$$
where $\varepsilon$ denotes the unit and the following multiplication conditions are imposed
\begin{enumerate}
\item[i)]
$i^2 = j^2 = k^2 = -1,$
\item[ii)]
$ij = k, ji = -k, jk = i, kj = -i, ki = j, ki = j, ik = -j.$
\end{enumerate}
Also the two-cocycle $\rho$ is defined by $\rho(a,b)=(-1)^{\langle a,b\rangle}$, where $\langle a,b\rangle$ is the usual scalar product of 3-vectors. Indeed $\langle i,j\rangle=1$ and similarly for $k$, so that $i,j$ and $k$, $\rho$-commute with each other. But $\langle i,i\rangle=\langle j,j\rangle=\langle k,k\rangle=2$, so that $i,j$ and $k$ commute with themselves. Thus, quaternion algebra $H$ is a $\rho$-commutative algebra. 
If we set linear map $\phi_H(i)=ai,\phi_H(j)=bj,\phi_H(k)=ck,\quad a,b,c\in\bC$, then we have a Hom-$\rho$-commutative quaternion algebra. But, $H$ is a Hom-associative $\rho$-commutative algebra if $a=b=c$.
\end{example}
\begin{definition}
A Hom-$\rho$-Lie algebra is a $G$-graded vector space $A$ together with a bilinear map $[\cdot,\cdot]_{\rho}:A\times A\longrightarrow A$, a two-cycle $\rho$ and a linear map $\phi:A\longrightarrow A$
satisfying the following relations
\begin{align*}
&\bullet [A_g,A_{g^{\prime}} ]_{\rho}\subset A_{g+g^{\prime}},\\
&\bullet [f, g]_{\rho} = -\rho(|f|, |g|)[g, f]_{\rho},\\
&\bullet \rho(|h|, |f|)[\phi(f), [g, h]_{\rho}]_{\rho}+\rho(|g|, |h|)[\phi(h), [f, g]_{\rho}]_{\rho}+\rho(|f|, |g|)[\phi(g), [h, f]_{\rho}]_{\rho} = 0.
\end{align*}
The second condition is called $\rho$-antisymmetry and the third condition is called $\rho$-Jacobi identity.
\end{definition}
\begin{proposition}\cite{LY}
The multiplex $(A,\cdot,\rho,\phi,[\cdot,\cdot]_{\rho})$ consisting of a Hom-associative $\rho$-algebra $(A,\cdot,\rho,\phi)$ and $\rho$-commutator $[f,g]_{\rho}=fg-\rho(|f|,|g|)gf$ is a  Hom-$\rho$-Lie algebra that is called  Hom-$\rho$-commutative Lie algebra.
 \end{proposition}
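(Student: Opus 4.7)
The plan is to verify the three defining axioms of a Hom-$\rho$-Lie algebra for $(A,\rho,\phi,[\cdot,\cdot]_{\rho})$ in increasing order of difficulty, using only the Hom-associativity identity $\phi(f)(g\cdot h)=(f\cdot g)\phi(h)$ and the two-cocycle relations $\rho(a,b)\rho(b,a)=1$ and $\rho(a+b,c)=\rho(a,c)\rho(b,c)$ (together with its bilinear partner in the second slot).

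First I would note that the grading condition $[A_a,A_b]_{\rho}\subseteq A_{a+b}$ is immediate: both $fg$ and $gf$ lie in $A_{|f|+|g|}$ whenever $f,g$ are homogeneous, so their scalar combination does too. For the $\rho$-antisymmetry, I would simply compute
\[
-\rho(|f|,|g|)[g,f]_{\rho}=-\rho(|f|,|g|)gf+\rho(|f|,|g|)\rho(|g|,|f|)fg=fg-\rho(|f|,|g|)gf=[f,g]_{\rho},
\]
using $\rho(|f|,|g|)\rho(|g|,|f|)=1$. Since both of these axioms are purely bookkeeping, neither $\phi$ nor Hom-associativity is needed here.

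The heart of the proposition is the $\rho$-Jacobi identity, and this is where I expect the main obstacle to lie. My approach is to expand $\rho(|h|,|f|)[\phi(f),[g,h]_{\rho}]_{\rho}$ by plugging in the definition of the inner bracket and then of the outer bracket, producing four terms of the form $\phi(f)(gh)$, $\phi(f)(hg)$, $(gh)\phi(f)$, $(hg)\phi(f)$ each multiplied by appropriate $\rho$-factors. Applying Hom-associativity to each term converts them into expressions in which $\phi$ sits on one of $g$, $h$, or $f$ and the remaining pair is multiplied without $\phi$. One then writes down the two cyclic analogues and observes that the six resulting terms pair up: for example, the term coming from $\phi(f)(gh)=(fg)\phi(h)$ in the first cyclic summand cancels against a term $(fg)\phi(h)$ arising from the third cyclic summand after applying Hom-associativity in the other direction.

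The bookkeeping step that will require the most care is verifying that the scalar coefficients of these pairs actually match. To do this I would, for each of the six products (such as $(fg)\phi(h)$), collect the two prefactors it receives from the two cyclic terms in which it appears and show that their sum vanishes; concretely, this reduces to equalities like
\[
\rho(|h|,|f|)\,\rho(|g|,|h|)\,\rho(|f|,|g|+|h|)\;=\;\rho(|g|,|h|)\,\rho(|h|,|g|+|f|),
\]
each of which follows from bilinearity of $\rho$ in both slots together with $\rho(a,b)\rho(b,a)=1$. Once this verification is done for a single representative pair, symmetry of the construction under cyclic permutation of $(f,g,h)$ disposes of the remaining two pairs, completing the proof. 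The result is therefore essentially the classical Lie-from-associative construction, with $\phi$ entering only through Hom-associativity and the $\rho$-signs being controlled entirely by the cocycle identities.
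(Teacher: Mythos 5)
The paper itself gives no proof of this proposition --- it is quoted from Yuan \cite{LY} --- and your plan is exactly the standard argument used there (and in the Makhlouf--Silvestrov ungraded case): grading and $\rho$-antisymmetry are bookkeeping, and the $\rho$-Hom-Jacobi identity follows by expanding the twelve products $\phi(x)(yz)$, $(yz)\phi(x)$, applying Hom-associativity $\phi(x)(yz)=(xy)\phi(z)$, and cancelling in six pairs whose scalar prefactors match by bilinearity of $\rho$ and $\rho(a,b)\rho(b,a)=1$; I checked that all six cancellations indeed occur, so the approach is correct and complete in outline. One small slip: the sample identity you display, $\rho(h,f)\rho(g,h)\rho(f,g+h)=\rho(g,h)\rho(h,g+f)$, simplifies to $\rho(g,h)\rho(f,g)=\rho(h,f)$ and is \emph{not} a consequence of the cocycle relations; the identities actually needed are of the form $\rho(g,h)\rho(h,f+g)=\rho(h,f)$ (e.g.\ matching the coefficient of $(fg)\phi(h)$ coming from $\rho(h,f)[\phi(f),[g,h]_{\rho}]_{\rho}$ against the one from $\rho(g,h)[\phi(h),[f,g]_{\rho}]_{\rho}$, not the third summand), and these do hold, so the error is only in the illustrative example, not in the method.
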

A Hom-$\rho$-Lie algebra $(A,\rho,\phi,[\cdot,\cdot]_{\rho})$ is called multiplicative if $\phi[f,g]_{\rho}=[\phi(f),\phi(g)]_{\rho}$, regular if $\phi$ is an automorphism and involutive if $\phi$ is an involution, that is $\phi^2 = Id_A$ (see \cite{AAM}).
\begin{definition}\cite{I B}
Let $(A,\cdot,\rho,\phi)$ be a Hom-$\rho$-algebra. A $\rho$-derivation of degree $|X|$ on $A$ is a linear map $X : A \longrightarrow A$ 
such that
\begin{equation}
X(f\cdot g) = X(f)\cdot g + \rho(|X|, |f|) f\cdot X(g).
\end{equation}
\end{definition}
If $\rho\text{-}DerA$ is denoted the space of all $\rho$-derivations of $A$,   then for $X\in Hg(\rho\text{-}DerA)$ and $Y\in Hg(\rho\text{-}DerA)$
the $\rho$-commutator of $X,Y$, defined by $[X, Y ]_{\rho} = X\circ Y - \rho(|X|, |Y |)Y\circ X$, is
a $\rho$-derivation. Furthermore, when $A$ is Hom-$\rho$-commutative, $\rho$-$DerA$ is also a
$A$-bimodule with actions $\rhd,\lhd$ defined by
\begin{equation}\label{1}
(f\rhd X).g = f(X.g), X \lhd f = \rho(|X|, |f|)f\rhd X.
\end{equation}
In fact any $G$-graded left module $M$ over a Hom-$\rho$-commutative algebra $A$ is a $A$-bimodule
with
\begin{equation}
f\rhd (X\lhd g) = (f\rhd X)\lhd g,\quad  f, g \in A,X \in M.
\end{equation}
Moreover, $\rho\text{-}DerA$ equipped with the $\rho$-commutator is a $\rho$-Lie algebra.
 
We will simply denote the left action $ f\rhd X$  as $fX$ and
$\rho(\vert X\vert ,\vert Y\vert)$ by 
$\rho(X,Y).$
\section{$p$-forms, $\rho$-Tensor product}
In this section, at first we define the $p$-forms and the wedge product and the next we turn to a brief discussion of the concept of tensor products to develop  Riemannian geometry on Hom-$\rho$-commutative algebras, then we concentrate on metrics to introduce the notion of linear connection and define the torsion and curvature associated to the connection. Later, we survey some properties and give to the reader some main points. Symbols 
$Hg(\Omega^1(A))$ 
and 
$Hg(\rho$-$DerA)$ 
apply respectively 
for homogeneous elements of 
$\Omega^1(A)$ and $\rho$-$DerA$.
\begin{definition}
A $p$-form on Hom-$\rho$-algebra $A$ is a $p$-linear map $\alpha_p : \times^p (\rho\text{-}DerA) \longrightarrow A$, $p$-linear in sense of left $A$-modules
\begin{align*}
&\bullet\alpha_p (f X_1, \cdots ,X_p) = f \alpha_p (X_1,\cdots  ,X_p),\\
&\bullet\alpha_p (X_1, \cdots ,X_j f,X_{j+1},\cdots  ,X_p) =\alpha_p (X_1,\cdots , X_j , f X_{j+1}, \cdots , X_p),\\
&\bullet\alpha_p (X_1, \cdots ,X_j ,X_{j+1}, \cdots ,X_p)=-\rho(X_j  ,  X_{j+1})\alpha_p (X_1, \cdots , X_{j+1}, X_j, \cdots ,X_p),
\end{align*}
where $j = 1, \cdots ,p-1$, $X_k\in Hg(\rho\text{-}DerA)$, $k = 1, \cdots ,p$, $f\in A $ and $ Xf$ is the right $A$-action
on $\rho$-$DerA$ defined by \eqref{1}. Let $\Omega^p(A)$ denotes the set of $p$-forms, then
$\Omega^p (A)$ is a $G$-graded $A$-bimodule with
\begin{align*}
&(\alpha_pf)(X_1, \cdots ,X_p) = \alpha_p(X_1, \cdots ,X_p)f,\\
&\ \ \ \ f\alpha_p=\rho(f,\alpha_p)\alpha_pf,\\
&|\alpha_p| = |\alpha_p (X_1, \cdots ,X_p)| - (|X_1| +\cdots + |X_p|).
\end{align*}
Then we have the exterior algebra
$\Omega(A)=\bigoplus_{p\geq0}\Omega^p(A)$ with $\Omega^0(A) = A$.
\end{definition}
\begin{definition}
The wedge product $\wedge $ in $\Omega(A)$ is the map $$\wedge:\Omega^p(A)\times\Omega^q(A)\longrightarrow \Omega^{p+q}(A),$$ 
defined by
\begin{align*}	&(\alpha\wedge\beta)(X_1,\cdots,X_p,\cdots,X_{p+q})\\
&=\sum_{\sigma\in S_{p,q}}sign(\sigma) \times \rho(\displaystyle\sum_{j=p+1}^{p+q} X_{\sigma(j)}, \alpha)\rho(X_{\sigma(k)}, X_{\sigma(l)})
\times \alpha( X_{\sigma(1)},\cdots ,X_{\sigma(p)})\beta( X_{\sigma(p+1)}, \cdots,X_{\sigma(p+q)}),
\end{align*}
for $\alpha\in Hg(\Omega^p(A)$ and $\beta\in Hg(\Omega^q(A)$, where 
$S_{p,q}$ is the set of permutations $\sigma\in S_{p+q}$ such
that $\sigma(1) < \sigma(2) < \cdots < \sigma(p)$ and $\sigma(p + 1) < \sigma(k + 2) < \cdots < \sigma(p+q)$, $ l<k$ and $\sigma(l) > \sigma(k)$.
\end{definition}
\textbf{$\rho$-tenosor product: }
Let $(A, \cdot,\rho, \phi)$ be a Hom-$\rho$-commutative algebra and $(\rho\text{-}DerA,[\cdot,\cdot]_{\rho},\rho,\phi_A)$ be a Hom-$\rho$-Lie algebra, where $\rho\text{-}DerA$ is the set of all $\rho$-derivation on Hom-$\rho$-commutative algebra $A$.\\

 For 
 $\alpha_1,\cdots ,\alpha_p\in Hg(\Omega^1(A))$,
we make a $p$-linear homogeneous map $s$ of  $G$-degree 
$|s| =\sum^p_{i=1}|\alpha_i\vert$ as
$s=\alpha_1\otimes_{\rho}\cdots\otimes_{\rho}\alpha_p$
and define it by
\begin{align*}
\alpha_1\otimes_{\rho}\cdots\otimes_{\rho}\alpha_p( X_1, \cdots ,X_p): &=\prod_{i=1}^p\alpha_i(X_i)\prod_{k=1}^{p-1}\rho(\sum^p_{j=k+1}
X_j, \alpha_k),
\end{align*}
where
$X_1,\cdots ,X_p\in Hg (\rho\text{-}DerA)$, 
that satisfy the following relations
\begin{align*}
&s( f X_1,X_2,\cdots ,X_p)= f s(X_1,X_2,\cdots ,X_p),\\
&s(X_1, \cdots,X_i \lhd f,X_{i+1}, \cdots ,X_p)=s(X_1, ...X_i, fX_{i+1}, \cdots ,X_p).
\end{align*}
Let $T^{\otimes^p_{\rho}}$ denotes the linear space generated by elements
 $s= \alpha_1\otimes_{\rho}\cdots\otimes_{\rho}\alpha_p$. $T^{\otimes^p_{\rho}}$ is a $A$-bimodule with the following actions
 \begin{align*}
 &(\alpha_1\otimes_{\rho}\cdots\otimes_{\rho}\alpha_p)f=\alpha_1\otimes_{\rho}\cdots\otimes_{\rho}(\alpha_pf),\\
 &f(\alpha_1\otimes_{\rho}\cdots\otimes_{\rho}\alpha_p)=(f\alpha_1)\otimes_{\rho}\cdots\otimes_{\rho}\alpha_p=\rho(f,\sum_{i=1}^p\alpha_i)\alpha_1\otimes_{\rho}\cdots\otimes_{\rho}(\alpha_pf).
 \end{align*}
We have the $\rho$-tensor algebra
$T^{\otimes}=\bigoplus_{p\geqslant 0}T^{\otimes^p_{\rho}}$ with 
$T^{\otimes^0_{\rho}}=A$ and natural algebra structure $\otimes_{\rho}$, which is defined on homogeneous elements in $T^{\otimes}$ by
\begin{align*}
&T_p\otimes_{\rho}T_q(X_1,X_2, \cdots , X_p,X_{p+1}, \cdots,X_{p+q})=\\
& \ \ \ \ \ \ \ \ \ \ \ \ \ \ \ \ \ \ \ \ T_p(X_1, X_2,\cdots, X_p)T_q(X_{p+1},\cdots , X_{p+q})\times\rho(\sum_{j=1}^q X_{p+j}, T_p),
\end{align*}
$\forall T_p\in T^{\otimes^p_{\rho}},T_q\in T^{\otimes^q_{\rho}}, X_1,\cdots ,X_{p+q}\in Hg(\rho\text{-}DerA)$.
Also, $T_p\otimes_{\rho}T_q$ has the degree 
$ |T_p\otimes_{\rho}T_q| = |T_p|+|T_q|$. Notice that
$T^{\otimes^p_{\rho}}$
 coincide with the space of all 
$\rho$-$p$-linear
 maps on
 $\times^p\rho\text{-}DerA$, if
 $\Omega^1(A)$
 and
 $\rho$-$Der_{\phi}A$
 are finitely generated. In this case,
$T\in T^{\otimes^p_{\rho}}$
is called {\it covariant 
$\rho$-tensor}.
\begin{definition}
A covariant 2-tensor $g\in T^{\otimes^2_{\rho}}$ is called non-degenerate tensor if 
$$\forall\alpha\in Hg(\Omega^1(A)), \exists X_{\alpha}\in\rho\text{-}DerA :~~ g(Y,X_{\alpha}) =\alpha(Y),\quad\forall Y\in\rho\text{-}DerA.$$
A (homogeneous) metric $g$ of degree $| g|$ on a $\rho$-commutative algebra $A$ is an homogeneous
symmetric and nondegenerate covariant 2-tensor on $A$.
\end{definition}
\begin{definition}
Let $(A, \cdot,\rho, \phi)$ be a multiplicative Hom-$\rho$-commutative algebra and $(\rho\text{-}DerA,[\cdot,\cdot]_{\rho},\rho,\phi_A)$ be a Hom-$\rho$-Lie algebra, where $\rho\text{-}DerA$ is the set of all $\rho$-derivations on $A$ endowed with a bilinear map $g(.,.)$ such that for any $ X,Y\in Hg(\rho\text{-}Der(A))$ the following equations are satisfied
\begin{align*}
&(i)~~g(fX,Y)=fg(X,Y),\quad \forall X,Y\in \rho\text{-}Der(A),\\
&(ii)~~g(X,Y)=\rho(X,Y)g(Y,X),\quad \forall X,Y\in Hg(\rho\text{-}Der(A)),\\
&(iii)~~g(X,Y) =g(\phi_A(X),\phi_A(Y)),\quad \forall X,Y\in \rho\text{-}Der(A)\\
&(iv)~~\text{The map}~~\tilde{g}:\rho\text{-}DerA\longrightarrow \Omega^1(A) \text{defined by}~~\tilde{g}(X)Y=g(Y,X) ~~\text{is a left}~~ A\text{-module isomorphism}.
\end{align*}
Then, we say that $A$ admits a  metric $g(.,.)$ (not necessary homogeneous). Also, homogeneous metric $\tilde{g}$  satisfies $g(X, fY)= g(Xf, Y)$ and $\tilde{g}(aX) =
a\tilde{g}(X)$, $\tilde{g}(X\lhd a) = \tilde{g}(X)a \rho(a,g), a\in Hg(A).$
\end{definition}
\begin{definition}
A linear connection on $\rho$-$DerA$ is a linear map
\[
\begin{cases}
\nabla : \rho\text{-}DerA \longrightarrow End(\rho\text{-}DerA),\\
 \ \ \ \ \ \ \ \ X \longrightarrow\nabla_X,
 \end{cases}
 \]
such that
\begin{align*}
&\nabla_{aX}Y  = a\nabla_XY, \quad a\in A,X, Y\in \rho\text{-}DerA,\\
&\nabla_X(aY ) = (X\cdot a)Y + \rho(X, a)\phi(a)\nabla_XY,\quad a \in Hg(A), X \in Hg(\rho\text{-}DerA).
\end{align*}
\end{definition}
\begin{definition}
Let $T \in T^{\otimes}$, $X \in Hg(\rho\text{-}DerA)$ and $\nabla_XT$ denotes the covariant derivative of
$T$, which is a $p$-linear map. This map is defined on $\rho\text{-}DerA$ by
\begin{align*}
\rho(X,\sum^p_{i=1}X_i)\nabla_XT(X_1, \cdots , X_p) &=\phi_A(X)\cdot T(X_1, \cdots, X_p)\\
& -\sum^p_{i=1}\rho(X,\sum^{i-1}_{l=1}X_l)T(\phi_A(X_1),\cdots ,\nabla_XX_i, \cdots , \phi_A(X_p)).
\end{align*}
$\rho$-tensor $T$ is said to be “parallel” or compatible with respect to a linear connection $\nabla$ if $\nabla T=0$. 
\end{definition}
Connection $\nabla$ is called {\it compatible with metric} 
$g$
if 
$$ \phi_A(X).g(Y, Z) =g(\nabla_XY,\phi_A(Z)) +\rho(X,Y)g(\phi_A(Y),\nabla_XZ),\quad\forall X,Y,Z\in Hg(\rho\text{-}DerA).$$
The curvature of a linear connection $\nabla$ is the map
\[
\begin{cases}
R : \rho\text{-}DerA \times\rho\text{-}DerA\longrightarrow End(\rho\text{-}DerA),\\
\ \ \ \ \ \ \ \ (X, Y ) \longrightarrow R(X, Y ):=R_{XY},
\end{cases}
\]
defined by
\[
R(X, Y )(Z) = \nabla_{\phi_A(X)}\nabla_YZ- \rho(X,Y )\nabla_{\phi_A(Y)} \nabla_XZ-\nabla_{[X,Y ]_{\rho}}\phi_A(Z).
\]
\begin{lemma}
 By the definition of curvature $R$, we get the following equalities
\begin{align*}
&a)\quad 
R(X, Y)= -\rho(X,Y)R(Y,X),\\
&b)\quad R(aX,Y)Z= \phi(a)R(X,Y)Z-\rho(X+a,Y)(\phi_A(Y)\cdot a)\nabla_XZ\\
&\ \ \ +\rho(X+a,Y)(Y\cdot a)\nabla_X\phi_A(Z)
+a\nabla_{\phi_A(X)}\nabla_YZ-\phi(a)\nabla_{\phi_A(X)}\nabla_YZ,\\
&c)\quad 
R(X,aY)(Z) = \rho(X,a)\phi(a)(R(X,Y) Z)+(\phi_A(X)\cdot a)\nabla_YZ\\
&\ \ \ -(X\cdot a)\nabla_Y\phi_A(Z)-\rho(X,a+Y)a\nabla_{\phi_A(Y)}\nabla_XZ+\rho(X,a+Y)\phi(a)\nabla_{\phi_A(Y)}\nabla_XZ\\
&\ \ \ -\phi(a)\nabla_{[X,Y]_{\rho}}\phi_A(Z)+\rho(X,a)\phi(a)\nabla_{[X,Y]_{\rho}}\phi_A(Z),\\
&d)\quad
R(X,Y) (aZ) = \rho(X+Y, a)\phi^2(a)(R(X,Y) Z)+(\phi_A(X)\cdot Y\cdot a)Z\\
&\ \ \ -\rho(X,Y)(\phi_A(Y)\cdot X\cdot a)Z+\rho(Y,a)(\phi_A(X)\cdot\phi(a))\nabla_YZ\\
&\ \ \ -\rho(X,Y)\rho(X,a)(\phi_A(Y)\cdot\phi(a))\nabla_XZ-\rho(X,Y)\rho(Y,X+a)\rho(X,a)\phi(a)\cdot X\\
&\ \ \ -\rho(X,Y)\rho(Y,X+a)\phi(X\cdot a)\nabla_{\phi_A(Y)}Z+\rho(X,Y+a)\phi(Y\cdot a)\nabla_{\phi_A(X)}Z\\
&\ \ \ -(X\cdot Y\cdot a)\phi_A(Z)+\rho(X,Y)(Y\cdot X\cdot a)\phi_A(Z)\\
&\ \ \ -\rho(X+Y,a)\phi(a)\nabla_{[X,Y]_{\rho}}\phi_A(Z)+\rho(X+Y,a)\phi^2(a)\nabla_{[X,Y]_{\rho}\phi_A(Z)}.
\end{align*}
\end{lemma}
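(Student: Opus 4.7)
The plan is to derive each of the four identities by direct expansion of the defining formula
$$R(X,Y)(Z) = \nabla_{\phi_A(X)}\nabla_Y Z - \rho(X,Y)\nabla_{\phi_A(Y)}\nabla_X Z - \nabla_{[X,Y]_\rho}\phi_A(Z),$$
inserting the specified argument into the appropriate slot and applying the defining axioms of a linear connection, namely $A$-linearity in the lower index and the twisted Leibniz rule $\nabla_U(aW) = (U\cdot a)W + \rho(U,a)\phi(a)\nabla_U W$ in the upper index, together with the $\rho$-antisymmetry of the bracket, the two-cycle identity $\rho(X,Y)\rho(Y,X) = 1$, and the fact that every element of $\rho\text{-}DerA$ is a $\rho$-derivation of $A$. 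No further structural input is needed; the content is symbolic.

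For part (a), I write $R(Y,X)(Z)$ from the definition and multiply by $-\rho(X,Y)$. The first summand becomes $-\rho(X,Y)\nabla_{\phi_A(Y)}\nabla_X Z$, which is the second summand of $R(X,Y)(Z)$. The second summand picks up the factor $(-\rho(X,Y))\cdot(-\rho(Y,X)) = \rho(X,Y)\rho(Y,X) = 1$ and becomes $\nabla_{\phi_A(X)}\nabla_Y Z$, the first summand of $R(X,Y)(Z)$. The bracket summand becomes $\rho(X,Y)\nabla_{[Y,X]_\rho}\phi_A(Z) = -\nabla_{[X,Y]_\rho}\phi_A(Z)$ after applying $[Y,X]_\rho = -\rho(Y,X)[X,Y]_\rho$.

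Parts (b) and (c) are parallel direct computations. In (b), $A$-linearity in the first slot handles $\nabla_{\phi_A(aX)}\nabla_Y Z$; the middle term is obtained by first pulling $a$ out via $\nabla_{aX} Z = a\nabla_X Z$ and then expanding $\nabla_{\phi_A(Y)}(a\nabla_X Z)$ with the twisted Leibniz rule; and the bracket term is reduced using the identity $[aX,Y]_\rho = a[X,Y]_\rho - \rho(a+X,Y)(Y\cdot a)X$, which comes from $Y$ being a $\rho$-derivation, followed again by $A$-linearity of $\nabla$. Part (c) is the mirror calculation: the Leibniz rule is applied in the $Y$-slot, and the bracket reduces via $[X,aY]_\rho = (X\cdot a)Y + \rho(X,a)a[X,Y]_\rho$ (modulo the correct $\rho$-twist). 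The appearance of $\phi(a)$, as opposed to $a$, in the final expressions is a direct consequence of the twisted form of the Leibniz rule and of the Hom-twisting of $\phi_A$ on the product $aX$.

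Part (d) is the main obstacle. Here the scalar $a$ sits inside $Z$, so it is differentiated twice in both the first and the second summand of the definition of $R$ and once more inside $\phi_A$ in the third summand. Each application of $\nabla_U(aW) = (U\cdot a)W + \rho(U,a)\phi(a)\nabla_U W$ produces two summands, so the nested applications generate the whole list of terms on the right-hand side: the terms in which $a$ is differentiated twice ($\phi_A(X)\cdot Y\cdot a$ and its $\rho$-symmetric partner) contribute the second-derivation terms; the mixed terms such as $(\phi_A(X)\cdot\phi(a))\nabla_Y Z$ arise from differentiating $a$ once and then reapplying Leibniz; and the term in which $a$ survives as $\phi^2(a)$ reassembles into $\rho(X+Y,a)\phi^2(a) R(X,Y)Z$ once combined with the contribution from $\phi_A(aZ) = \phi(a)\phi_A(Z)$ in the bracket summand. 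The genuine difficulty is combinatorial bookkeeping: each commutation of $a$ past a derivation must be tracked with the correct $\rho$-sign, and the output of $\nabla_{[X,Y]_\rho}\phi_A(aZ)$ must be matched term by term against the other contributions, using the identity $[X,Y]_\rho\cdot a = \phi_A(X)\cdot Y\cdot a - \rho(X,Y)\phi_A(Y)\cdot X\cdot a$ to reconcile the action of the bracket on $a$ with the two separate double-derivation actions.
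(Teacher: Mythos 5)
Your part (a) is fine, and for (b)--(c) your overall route (expand the definition of $R$, use $A$-linearity in the lower slot, the twisted Leibniz rule in the upper slot, and a formula for $[aX,Y]_\rho$) is the same as the paper's, which in fact only writes out (b). But there is a concrete mismatch in the key input: you use the untwisted bracket identity $[aX,Y]_\rho = a[X,Y]_\rho - \rho(a+X,Y)(Y\cdot a)X$, whereas the paper's computation rests on the Hom-twisted version $[aX,Y]_\rho = \phi(a)[X,Y]_\rho - \rho(X+a,Y)(Y\cdot a)X$. This is not cosmetic: with your identity the bracket summand of $R(aX,Y)Z$ becomes $-a\nabla_{[X,Y]_\rho}\phi_A(Z)$, so when you reassemble the answer as $\phi(a)R(X,Y)Z+\cdots$ you are left with an extra correction term $\bigl(\phi(a)-a\bigr)\nabla_{[X,Y]_\rho}\phi_A(Z)$ that does not occur in the stated formula (b); likewise your middle term produces $(\phi_A(Y)\cdot a)\nabla_XZ$, not the $(Y\cdot\phi(a))\nabla_XZ$ appearing in the statement. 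In other words, the derivation you outline proves a formula genuinely different from the one in the lemma; to land on the stated right-hand side you must use (and justify, or at least cite) the twisted identity with $\phi(a)$, together with the silent rules $\nabla_{\phi_A(aX)}\nabla_YZ = a\nabla_{\phi_A(X)}\nabla_YZ$ and $\phi_A(Y)\cdot a = Y\cdot\phi(a)$ that the paper's own computation uses without comment. The same issue propagates to (c), where you only say ``modulo the correct $\rho$-twist,'' which is precisely the point at issue.

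For (d) you give a plan rather than a proof: the statement's right-hand side is a long, very specific list of terms, and whether your nested Leibniz expansions actually reassemble into it cannot be checked from the outline. Moreover the auxiliary identity you invoke, $[X,Y]_\rho\cdot a = \phi_A(X)\cdot Y\cdot a - \rho(X,Y)\,\phi_A(Y)\cdot X\cdot a$, does not follow from the paper's definition $[X,Y]_\rho = X\circ Y - \rho(X,Y)\,Y\circ X$ (which yields the same formula without the $\phi_A$'s), so it is another unproved twisted convention doing essential work. To be fair, the paper itself proves only (b) and leaves (a), (c), (d) unargued, so your attempt is more ambitious; but as written, the missing ingredient throughout is the precise Hom-twisted module/bracket conventions, and without them your computation does not reproduce the stated equalities.
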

\begin{proof}
For sake of the brevity, we proof the relation (b). By the definitin of curvature $R$, we have
\begin{align*}
R(aX,Y)Z&=\nabla_{\phi_A(aX)}\nabla_YZ-\rho(X+a,Y)\nabla_{\phi_A(Y)}\nabla_{aX}Z-\nabla_{[aX,Y]_{\rho}}\phi_A(Z).
\end{align*}
Now, by the properties of connection $\nabla$ and the relation $[aX,Y]_{\rho}=-\rho(X+a,Y)(Y\cdot a)X+\phi(a)[X,Y]_{\rho}$, we get
\begin{align*}
R(aX,Y)Z&=a\nabla_{\phi_A(X)}\nabla_YZ-\rho(X+a,Y)(\phi_A(Y)\cdot a)\nabla_XZ\\
&\ \ \ -\rho(X+a,Y)\rho(Y,a)\phi(a)\nabla_{\phi_A(Y)}\nabla_{X}Z\\
&\ \ \ -(Y\cdot a)\nabla_X\phi_A(Z) -\phi(a)\nabla_{[X,Y]_{\rho}}\phi_A(Z)\\
&=\phi(a)R(X,Y)Z-\rho(X+a,Y)(\phi_A(Y)\cdot a)\nabla_XZ\\
&\ \ \ +\rho(X+a,Y)(Y\cdot a)\nabla_X\phi_A(Z)
+a\nabla_{\phi_A(X)}\nabla_YZ-\phi(a)\nabla_{\phi_A(X)}\nabla_YZ.
\end{align*}
\end{proof}
Also, the torsion of the connection $\nabla$ is defined by
\[
\begin{cases}
T_{\nabla} : \rho\text{-}DerA\times\rho\text{-}DerA \longrightarrow\rho\text{-}DerA,\\
T_{\nabla}(X, Y) = \nabla_XY -\rho(X,Y)\nabla_Y X -[X, Y ]_{\rho}.&
\end{cases}
\]
Connection $\nabla$ called torsion-free if
$$T_\nabla=0,\:\text{i.e.,}\quad [X,Y]_{\rho} =\nabla_XY -\rho(X,Y)\nabla_YX,\quad\forall X,Y\in Hg(\rho\text{-}DerA).$$
In the similar way of the curvature, it is easy to see that the torsion have the following properties
\begin{align*}
T(aX,Y)&=T(X,Y)+a\nabla_XY-\phi(a)\nabla_XY,\\
T(X,aY)&=\rho(X,a)\phi(a)T(X,Y)-\rho(X,a+Y)a\nabla_YX\\
& \ \ \  +\rho(X+a,Y)\phi(a)\nabla_YX-\rho(X,a)\phi(a)[X,Y]_{\rho}+\phi(a)[X,Y]_{\rho}.
\end{align*}

\begin{theorem}
There exists a unique linear connection on every Hom-$\rho$-commutative algebra with homogeneous metric $g$, which is torsion-free and compatible with metric
$g$. This connection is called Levi-Civita connection associated to $g$.
\end{theorem}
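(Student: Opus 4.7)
The plan is to adapt the classical Koszul argument to the Hom-$\rho$-commutative setting, with careful bookkeeping of the twist $\phi_A$ and the $\rho$-signs. First, I would assume a torsion-free connection $\nabla$ compatible with $g$ exists and derive a Koszul-type formula for $g(\nabla_X Y,\phi_A(Z))$. Writing the compatibility identity
\[
\phi_A(X)\cdot g(Y,Z) \;=\; g(\nabla_X Y,\phi_A(Z)) + \rho(X,Y)\,g(\phi_A(Y),\nabla_X Z)
\]
for the three cyclic permutations of $(X,Y,Z)$ and combining two of them (added) with the third (subtracted), with the appropriate $\rho$-factors coming from the $\rho$-symmetry of $g$, I then replace the pairs $\nabla_X Y - \rho(X,Y)\nabla_Y X$ by $[X,Y]_\rho$ via the torsion-free hypothesis. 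Isolating $g(\nabla_X Y,\phi_A(Z))$ produces an identity of the form
\[
2\,g(\nabla_X Y,\phi_A(Z)) \;=\; \Phi_{g,\phi_A}(X,Y,Z),
\]
whose right-hand side involves only $g$, the action of derivations on $A$, the $\rho$-bracket, and the twist $\phi_A$.

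Uniqueness follows immediately: since $g$ is non-degenerate the map $\tilde{g}:\rho\text{-}DerA\to\Omega^1(A)$ is an isomorphism, and the homogeneity condition $g(Y,X)=g(\phi_A(Y),\phi_A(X))$ built into the definition of a metric ensures that $\phi_A(Z)$ ranges over enough test elements to pin down $\nabla_X Y$. For existence, I would take the Koszul identity as the \emph{definition} of $\nabla$ and verify the connection axioms one by one. Left $A$-linearity $\nabla_{aX}Y = a\nabla_X Y$ reduces to $A$-linearity of $g$ together with the $\rho$-derivation rule applied to $X\cdot a$. The twisted Leibniz rule $\nabla_X(aY) = (X\cdot a)Y + \rho(X,a)\phi(a)\nabla_X Y$ comes out of expanding $\phi_A(X)\cdot(a\,g(Y,Z))$, using that $\phi$ is multiplicative and that $g$ interacts correctly with the twist. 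Metric compatibility and vanishing torsion are then built into the symmetric/antisymmetric halves of $\Phi_{g,\phi_A}$ by construction.

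The hard part will be the careful bookkeeping of the $\rho$-signs and the propagation of $\phi_A$ through the cyclic sum: each permutation of $(X,Y,Z)$ contributes factors such as $\rho(X,Y+Z)$ that must cancel for the Koszul identity to be consistent, and the $\phi_A$-twist has to match between the left-hand side (where derivations act on $A$) and the right-hand side (where covariant derivatives land in $\rho\text{-}DerA$). A secondary subtlety is that the verification of the twisted Leibniz rule uses the multiplicativity of $\phi$ essentially, so the standing hypothesis that $(A,\cdot,\rho,\phi)$ is multiplicative, which is built into the definition of a metric above, cannot be dropped.
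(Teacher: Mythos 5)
Your proposal follows essentially the same route as the paper: the paper's proof consists precisely of writing down the Koszul-type identity obtained from the cyclic permutations of the compatibility condition together with the torsion-free hypothesis, and then invoking non-degeneracy of $g$, exactly as you outline. In fact your plan is more complete than the printed proof, which states the Koszul equation and declares the rest clear, whereas you also indicate how existence would be verified by taking that formula as the definition of $\nabla$ and checking the connection axioms.
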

\begin{proof}
By the compatibility condition of the connection $\nabla$ with the metric $g$, we can easily show that the Koszul equation is given by
\begin{align*}
2\rho( Z,Y)g(\phi_A(X),\nabla_YZ) &=\rho (X,Z)\phi_A(Z)\cdot g(X,Y) +\rho (X,Z) g(\phi_A(Z), [X,Y]_{\rho}) \\
&\ \ \ -\phi_A(X)\cdot g(Z,Y) -\rho (X,Z)g([Z,X]_{\rho},\phi_A(Y))\\
&\ \ \  +\rho (Z,Y)\rho(X,Y+Z) \phi_A(Y)\cdot g(Z,X) +\rho (Z,Y)\rho(X,Y+Z) g([Y,Z]_{\rho},\phi_A(X)),
\end{align*}
where $X,Y,Z\in Hg(\rho\text{-}DerA)$. So, the proof is clear.
\end{proof}
\textbf{Christoffel coefficients}: 
Let us assume that $A$ is generated as algebra by $n$ homogeneous coordinates $x_1, x_2,\cdots, x_n$ and $\rho\text{-}DerA$ is generated by $\partial_1,\cdots,\partial_n$, where $\partial_i=\frac{\partial}{\partial x_i}$ such that  $\frac{\partial}{\partial x_i}(x_j)=\delta_{i j}$, $|\partial_i|=-|x_i|$. Also, we assume that $\Omega^1(A)=\langle dx^1,\cdots,dx^n\rangle$ such that $|dx^i|=| x_i|$. We let $(dx^i)$ be the dual basis of $(\partial_i)$ and we set
	$$g = dx^m\otimes dx^n g_{mn}\quad (\text{summation}).$$
\begin{proposition}
Assuming that $[\partial_i,\partial_j]=0$ and $\nabla_{\partial_i}\partial_j=\sum_s\phi_A(\partial_s)\triangleleft \Gamma_{ij}^s$,  the $\rho$-Christoffel coefficients $\Gamma_{ij}^t$ of $G$-degree $|\Gamma_{ij}^t|=x_t-x_i-x_j$ of the Levi-Civita connection $\nabla$ are as follows
\begin{equation}\label{ZZZZ}
\Gamma_{ij}^t=\frac{1}{2}\rho(g,x_t-x_i-x_j)\times\sum_k\tilde{g}^{tk}\{-\phi_A(\partial_k)\cdot\tilde{g}_{ij} +\rho(x_i+x_k,x_j)\phi_A(\partial_j)\cdot\tilde{g}_{ki} +\rho(x_k, x_i+x_j)\phi_A(\partial_i)\cdot\tilde{g}_{jk}\}.
\end{equation}
\end{proposition}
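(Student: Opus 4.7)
The plan is to specialize the Koszul equation from the preceding theorem to the basis vectors $\partial_i,\partial_j,\partial_k$, use the hypothesis $[\partial_i,\partial_j]_\rho = 0$ to eliminate the bracket terms, then solve the resulting linear system for $\Gamma^t_{ij}$ by inverting the metric matrix. Throughout, the main bookkeeping issue will be tracking the $\rho$-phase factors coming from commuting homogeneous scalars past homogeneous derivations and past the metric $g$.

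First I would substitute $Y=\partial_i$, $Z=\partial_j$ and $X=\partial_k$ in the Koszul identity. Because the three bracket terms $g(\phi_A(\partial_j),[\partial_k,\partial_i]_\rho)$, $g([\partial_j,\partial_k]_\rho,\phi_A(\partial_i))$ and $g([\partial_i,\partial_j]_\rho,\phi_A(\partial_k))$ all vanish, what remains is
\begin{align*}
2\rho(x_j,x_i)\,g\!\left(\phi_A(\partial_k),\nabla_{\partial_i}\partial_j\right)
&= \rho(x_k,x_j)\,\phi_A(\partial_j)\!\cdot\! \tilde g_{ki}
 - \phi_A(\partial_k)\!\cdot\! \tilde g_{ji}\\
&\quad +\rho(x_j,x_i)\rho(x_k,x_i+x_j)\,\phi_A(\partial_i)\!\cdot\! \tilde g_{jk}.
\end{align*}

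Next I would insert the ansatz $\nabla_{\partial_i}\partial_j=\phi_A(\partial_s)\triangleleft \Gamma^s_{ij}$ into the left-hand side. The right action $X\lhd f=\rho(|X|,|f|)\,f\rhd X$ from \eqref{1} converts $\phi_A(\partial_s)\lhd\Gamma^s_{ij}$ into a scalar multiple of $\Gamma^s_{ij}\,\phi_A(\partial_s)$; then left $A$-linearity of $g$ in its first slot, combined with the defining property $g(Y,X)=g(\phi_A(Y),\phi_A(X))$, reduces $g(\phi_A(\partial_k),\phi_A(\partial_s))$ to $\tilde g_{ks}$ up to a controlled $\rho$-phase. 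Consequently the left-hand side becomes a linear combination $\sum_s c(i,j,k,s)\,\Gamma^s_{ij}\,\tilde g_{ks}$ for explicit $\rho$-scalars $c(i,j,k,s)$; multiplying by $\tilde g^{tk}$ (the inverse of $\tilde g_{ks}$ in the sense $\tilde g^{tk}\tilde g_{ks}=\delta^t_s$) and summing over $k$ isolates $\Gamma^t_{ij}$.

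The degree statement $|\Gamma^t_{ij}|=x_t-x_i-x_j$ is forced by the requirement that both sides of $\nabla_{\partial_i}\partial_j=\phi_A(\partial_s)\lhd\Gamma^s_{ij}$ have $G$-degree $x_i+x_j$, combined with $|\phi_A(\partial_s)|=x_s$; the only non-zero summand on the right is then $s=t$ with $|\Gamma^t_{ij}|=x_t-x_i-x_j$. This in turn is precisely what forces the global factor $\rho(g,\,x_t-x_i-x_j)$ to appear in the closed formula, since to pull the homogeneous $\Gamma^t_{ij}$ past the (graded) metric $g$ one must pay exactly this phase.

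The main obstacle will be not any conceptual difficulty but the careful accounting of the $\rho$-phases. Each time one commutes a homogeneous scalar $\Gamma^s_{ij}$ past a homogeneous derivation $\phi_A(\partial_r)$ or past the metric tensor $g$, one accrues a $\rho$-factor dictated by the respective degrees. Matching the accumulated phases against the single compact prefactor $\tfrac12\rho(g,x_t-x_i-x_j)$ in the statement requires using the two-cocycle identities $\rho(a+b,c)=\rho(a,c)\rho(b,c)$ and $\rho(a,b)=\rho(b,a)^{-1}$ to collapse products like $\rho(x_k,x_j)\rho(x_j,x_i)^{-1}\rho(x_k,x_i+x_j)^{-1}\cdots$ into the stated form. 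Once these simplifications are performed, the proposition drops out immediately from the inversion step above.
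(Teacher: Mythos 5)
Your plan is essentially the paper's own proof: specialize the Koszul identity to $X=\partial_k$, $Y=\partial_i$, $Z=\partial_j$, drop the bracket terms using $[\partial_i,\partial_j]_\rho=0$, substitute the ansatz $\nabla_{\partial_i}\partial_j=\sum_s\phi_A(\partial_s)\lhd\Gamma^s_{ij}$, contract with $\tilde g^{tk}$, and recover the prefactor $\tfrac12\rho(g,x_t-x_i-x_j)$ by the same degree bookkeeping (the paper uses $|g|=x_t+x_k+|g_{tk}|$, i.e.\ $-x_t-x_k=|g_{tk}|-|g|$, exactly the phase argument you sketch). The only caveat is that the $\rho$-phase accounting you defer is where all the actual work lies, but your outline identifies the correct identities to carry it out.
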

\begin{proof}
Setting $X=
\partial_k$, $Y=\partial_i$ and $Z=\partial_j$ in the Koszul formula, we get
\begin{align*}
2g(\phi_A(\partial_k),\sum_s\phi_A(\partial_s\lhd\Gamma_{ij}^s))&=\rho(\partial_k,\partial_i+\partial_j)\phi_A(\partial_i)\cdot\tilde{g}_{jk} +\rho(\partial_k+\partial_i,\partial_j)\phi_A(\partial_j)\cdot\tilde{g}_{ki}-\phi_A(\partial_k)\cdot\tilde{g}_{ij},
\end{align*}
thus
\begin{align*}
2\sum_s\rho(\partial_s,\Gamma_{ij}^s)\rho(\partial_k,\Gamma_{ij}^s)\Gamma_{ij}^s\tilde{g}_{ks}&=\rho(\partial_k,\partial_i+\partial_j)\phi_A(\partial_i)\cdot\tilde{g}_{jk} +\rho(\partial_k+\partial_i,\partial_j)\phi_A(\partial_j)\cdot\tilde{g}_{ki}-\phi_A(\partial_k)\cdot\tilde{g}_{ij}.
\end{align*}
Multiplying both sides of the above relation by $\tilde{g}^{tk}$, we get
\begin{align*}
2\sum_s\rho(-x_t-x_k,\Gamma_{ij}^t)\rho(\tilde{g}^{tk},\Gamma_{ij}^t)\Gamma_{ij}^t&=\rho(x_k,x_i+x_j)\phi_A(\partial_i)\cdot\tilde{g}_{jk} +\rho(x_k+x_i,x_j)\phi_A(\partial_j)\cdot\tilde{g}_{ki}-\phi_A(\partial_k)\cdot\tilde{g}_{ij}.
\end{align*}
Since $g = dx^t\otimes dx^k g_{tk}$, so $|g|=x_t+x_k+g_{tk}$ and then $-x_t-x_k=g_{tk}-|g|$. Therefore
\begin{align*}
2\sum_s\rho(g_{tk},\Gamma_{ij}^t)\rho(-g,\Gamma_{ij}^t)\rho(-g_{tk},\Gamma_{ij}^t)\Gamma_{ij}^t&=\rho(x_k,x_i+x_j)\phi_A(\partial_i)\cdot\tilde{g}_{jk} +\rho(x_k+x_i,x_j)\phi_A(\partial_j)\cdot\tilde{g}_{ki}\\
&\ \ \ \ -\phi_A(\partial_k)\cdot\tilde{g}_{ij}.
\end{align*}
Thus, in the last step, the result will be obtained, that is
\begin{align*}
\Gamma_{ij}^t&=\dfrac{1}{2}\rho(g,\Gamma_{ij}^t)\sum_k\tilde{g}^{tk}\{\rho(x_k,x_i+x_j)\phi_A(\partial_i)\cdot\tilde{g}_{jk} +\rho(x_k+x_i,x_j)\phi_A(\partial_j)\cdot\tilde{g}_{ki}-\phi_A(\partial_k)\cdot\tilde{g}_{ij}\}.
\end{align*}
\end{proof}
\begin{example}\label{12}
Extended-hyperplane $A^2_q=\langle 1,x,y,x^{-1},y^{-1}:\quad x\cdot y =qy\cdot x\rangle$ is a
$\mathbb{Z}\times\mathbb{Z}$-graded algebra. Considering the two-cycle
$$\rho(n,n^{\prime}) =q^{\sum_{j,k=1}^n n_j n^{\prime}_k\alpha_{jk}},$$
where $ \alpha_{jk}=1$ if $j< k$, $0$ if 
$j=k$ and $-1$ if $j>k$, $A^2_q$ is a $\rho$-commutative algebra. In this example, we intend to define morphisms $\phi$ and $\phi_{A^2_q}$ on the $A^2_q$ and $\rho\text{-}DerA^q_2$ respectively, to make Hom-$\rho$-commutative algebra and Hom-$\rho$-commutative Lie algebra, respectively. In the next, by considering the metric $g$ and it's components that is defined in \cite{N}, we obtain the $\rho$-Christoffel coefficients $\Gamma_{ij}^t$ corresponding to $\phi_{A^2_q}$ by \eqref{ZZZZ}.  If we define the linear map $\phi:A^2_q\longrightarrow A^2_q$ by 
$$\phi(x)= ax,\; \phi(y)= ay,\; \phi(x^{-1})= bx^{-1},\; \phi(y^{-1})= by^{-1},\quad a,b\in k,$$
then we have  the Hom-$\rho$-commutative algebra $A^2_q$. Note that $(A^2_q,.,\rho.\phi)$ is a Hom-associative $\rho$-commutative algebra if $a=b$. The set $\rho\text{-}DerA^q_2$ of all $\phi$-$\rho$-derivations on $A^q_2$ is a 
$A^2_q$-bimodule generated by 
$\frac{\partial}{\partial x}$ and $\frac{\partial}{\partial y}$, and 
$\Omega^1(A)$ is generated by $dx$, $dy$ such that 
$d x_j(\frac{\partial}{\partial x_i}) =\frac{\partial}{\partial x_i}(x_j) =\delta_{i,j},$ 
$\vert \frac{\partial}{\partial x_i}\vert =-\vert x_i\vert$ and $\vert d x_i\vert =\vert x_i\vert$,  where $x_1=x$
and $x_2 =y$.  We define the linear map
$\phi_{A^2_q}:\rho\text{-}DerA^2_q\longrightarrow\rho\text{-}DerA^2_q$ by 
$$\phi_{A^2_q}(\frac{\partial}{\partial x}) =\lambda\frac{\partial}{\partial x}+\gamma\frac{\partial}{\partial y},\quad \phi_{A^2_q}(\frac{\partial}{\partial y})=(\lambda+\gamma)\frac{\partial}{\partial y}.$$
where $\lambda, \gamma \in k$. All homogeneous metrics on $\rho\text{-}DerA^2_q$ were defined by \cite{N}	$$g=g_{11}dx\otimes_{\rho} dx +g_{12}dx\otimes_{\rho} dy+qg_{21} dy\otimes_{\rho} dx+g_{22}dy\otimes_{\rho} dy.$$
Now, by using metric $g$, we try to find the following relation
$$g(X,Y) =g(\phi_{A^2_q}(X),\phi_{A^2_q}(Y)),\quad \forall X,Y\in Hg(\rho\text{-}DerA^2_q).$$
	
{\bf Case 1:} If $X=\frac{\partial}{\partial x}$ and $Y=\frac{\partial}{\partial x}$, we have $g(\frac{\partial}{\partial x},\frac{\partial}{\partial x})=g_{11}$. Also, we get
\begin{align*}
g(\phi_{A^2_q}(\dfrac{\partial}{\partial x}) ,\phi_{A^2_q}(\dfrac{\partial}{\partial x}))&=g(\lambda\dfrac{\partial}{\partial x}+\gamma\dfrac{\partial}{\partial y},\lambda\dfrac{\partial}{\partial x}+\gamma\dfrac{\partial}{\partial y})\\	&=\lambda^2g(\dfrac{\partial}{\partial x},\dfrac{\partial}{\partial x}) +\lambda\gamma g(\dfrac{\partial}{\partial x},\dfrac{\partial}{\partial y}) +\gamma\lambda g(\dfrac{\partial}{\partial y},\dfrac{\partial}{\partial x})\\
&\ \ \  +\gamma^2g(\dfrac{\partial}{\partial y},\dfrac{\partial}{\partial y}) =\lambda^2g_{11}+\lambda\gamma(1+q)g_{12} +\gamma^2g_{22}.
\end{align*}
In this case, we find $\lambda^2=1$, $\lambda\gamma=0$, $\gamma^2=0$, so $\lambda=\pm 1$, $\gamma=0$.\\

{\bf Case 2:} If $X=\frac{\partial}{\partial y}$ and $Y=\frac{\partial}{\partial y}$, we have
\[
\begin{cases}
g(\phi_{A^2_q}(\dfrac{\partial}{\partial y}) ,\phi_{A^2_q}(\dfrac{\partial}{\partial y}))&=g((\lambda +\gamma)\dfrac{\partial}{\partial y},(\lambda +\gamma)\dfrac{\partial}{\partial y}) =(\lambda+\gamma)^2 g(\dfrac{\partial}{\partial y} ,\dfrac{\partial}{\partial y}) =(\lambda+\gamma)^2 g_{22},\\
g(\frac{\partial}{\partial y}, \frac{\partial}{\partial y})=g_{22}.&
\end{cases}
\]
In this case, we find $(\lambda+\gamma)^2=1$.\\
	
{\bf Case 3:} If $X=\frac{\partial}{\partial x}$ and $Y=\frac{\partial}{\partial y}$, we get $g(\frac{\partial}{\partial x}, \frac{\partial}{\partial y})=g_{21}=qg_{12}$ and 
\begin{align*}
g(\phi_{A^2_q}(\dfrac{\partial}{\partial x}) ,\phi_{A^2_q}(\dfrac{\partial}{\partial y}))&=g(\lambda\dfrac{\partial}{\partial x}+\gamma\dfrac{\partial}{\partial y},(\lambda+\gamma)\dfrac{\partial}{\partial y}) =\lambda(\lambda+\gamma) g(\dfrac{\partial}{\partial x} ,\dfrac{\partial}{\partial y})\\
&\ \ \ +\gamma(\lambda+\gamma) g(\dfrac{\partial}{\partial y} ,\dfrac{\partial}{\partial y})=q\lambda(\lambda+\gamma) g_{12}+\gamma(\lambda+\gamma)g_{22}.
\end{align*}
In this case, we find  $\lambda(\lambda+\gamma)=1$, $\gamma(\lambda+\gamma)=0$.\\
	
{\bf Case 4:} If $X=\frac{\partial}{\partial y}$ and $Y=\frac{\partial}{\partial x}$, we have
\[
\begin{cases}
g(\phi_{A^2_q}(\dfrac{\partial}{\partial y}) ,\phi_{A^2_q}(\dfrac{\partial}{\partial x}))&=g((\lambda +\gamma)\dfrac{\partial}{\partial y},\lambda \dfrac{\partial}{\partial x}+\gamma\dfrac{\partial}{\partial y}) =\lambda(\lambda+\gamma) g(\dfrac{\partial}{\partial y} ,\dfrac{\partial}{\partial x}) \\
&\ \ \ +\gamma(\lambda+\gamma) g(\dfrac{\partial}{\partial y} ,\dfrac{\partial}{\partial y})=\lambda(\lambda+\gamma) g_{12}+\gamma(\lambda+\gamma)g_{22},\\
g(\frac{\partial}{\partial y},\frac{\partial}{\partial x})=g_{12}.&
\end{cases}
\]
In this case, we also find $\lambda(\lambda+\gamma)=1$, $\gamma(\lambda+\gamma)=0$.\\
	
By the above cases, If $\gamma=0$ and $\lambda=\pm 1$, we obtain
\[\phi_{A^2_q}(\frac{\partial}{\partial x})=\pm\frac{\partial}{\partial x}, \quad \phi_{A^2_q}(\frac{\partial}{\partial y})=\pm\frac{\partial}{\partial y}.\]

Therefore $\phi_{A^2_q}$ can be write in the  following two cases

\[
\begin{cases}
\bullet\ \ 
 \phi_{A^2_q}(\dfrac{\partial}{\partial x})=-\dfrac{\partial}{\partial x},\ \ \phi_{A^2_q}(\dfrac{\partial}{\partial y})=-\dfrac{\partial}{\partial y}, \\ 
\bullet\ \ 
 \phi_{A^2_q}(\dfrac{\partial}{\partial x})=\dfrac{\partial}{\partial x},\ \ \phi_{A^2_q}(\dfrac{\partial}{\partial y})=\dfrac{\partial}{\partial y}.&
 \end{cases}
\]
\\
Also, we know that $\rho\text{-}DerA^q_2$ with $\rho$-commutator 
$[X,Y] =XY-\rho(X,Y)YX$ and the linear map $\phi_{A^2_q} $ is a Hom-$\rho$-commutative Lie algebra. But,
note that for the elements of the basis of  
$\rho\text{-}DerA^q_2$,
we have
\begin{equation}\label{10}
[\frac{\partial}{\partial x},\frac{\partial}{\partial y}]_{\rho}=\frac{\partial}{\partial x}\frac{\partial}{\partial y}-\rho(\frac{\partial}{\partial x},\frac{\partial}{\partial y})\frac{\partial}{\partial y}\frac{\partial}{\partial x}.
\end{equation}
On the other hand, since 
$[\frac{\partial}{\partial x},\frac{\partial}{\partial y}]_{\rho}\in\rho\text{-}DerA^2_q$, then we can write 
$$[\frac{\partial}{\partial x},\frac{\partial}{\partial y}]_{\rho}=p\dfrac{\partial}{\partial x}+q\dfrac{\partial}{\partial y},$$
where $p,q\in A$, so
$[\frac{\partial}{\partial x},\frac{\partial}{\partial y}]_{\rho}(x)=p$ and $[\frac{\partial}{\partial x},\frac{\partial}{\partial y}]_{\rho}(y)=q$.
But, \eqref{10} gives us
$[\frac{\partial}{\partial x},\frac{\partial}{\partial y}]_{\rho}(x)=[\frac{\partial}{\partial x},\frac{\partial}{\partial y}]_{\rho}(y)=0$.\\

Let us setting $\tilde{g}_{mk}:=\rho(\partial_m,\partial_k)g_{mk}=\rho(x_m,x_k)g_{mk}$ and continue with $g_{11}=x^{-2}, g_{12}=x^{-1}y^{-1}, g_{22} =y^{-2}$. In this case, $g$ is a homogeneous metric on $A^2_q$ (of degree $(0, 0)$) if and only if   
$$D=\dfrac{1-q^2}{q^2}x^{-2}y^{-2},$$
is invertible, in other words if and only if $q\neq 0,1, -1$.  
So, by the definition of $\tilde{g}$ we have
$$(\tilde{g}_{mk})=\left(\begin{array}{cc}
x^{-2} & qx^{-1}y^{-1}\\ x^{-1}y^{-1} & y^{-2}
\end{array}\right),$$
and
$$(\tilde{g}^{mk})=(\tilde{g}_{mk})^{-1}=\frac{1}{1-q^2}\left(\begin{array}{cc}
x^2 & -qxy\\ -xy & y^2
\end{array}\right).$$
Here, it seems that, we are ready to find the $\rho$-Christoffel coefficients $\Gamma_{ij}^t$ corresponding to each $\phi_{A^2_q}$. 
So, we need to consider the following  cases:\\
If
$\phi_{A^2_q}(\frac{\partial}{\partial x})=-\frac{\partial}{\partial x},\; \phi_{A^2_q}(\frac{\partial}{\partial y})=-\frac{\partial}{\partial y},$
we have
 $$\Gamma_{11}^1=x^{-1}, \Gamma_{22}^2=y^{-1},\Gamma_{12}^1=\Gamma_{21}^1=\Gamma_{12}^2=\Gamma_{21}^2=\Gamma_{11}^2=\Gamma_{22}^1=0.$$
 
 If $\phi_{A^2_q}(\frac{\partial}{\partial x})=\frac{\partial}{\partial x},\; \phi_{A^2_q}(\frac{\partial}{\partial y})=\frac{\partial}{\partial y},$
 we can find the following $\rho$-Christoffel coefficients
 $$\Gamma_{11}^1=-x^{-1}, \Gamma_{22}^2=-y^{-1},\Gamma_{12}^1=\Gamma_{21}^1=\Gamma_{12}^2=\Gamma_{21}^2=\Gamma_{11}^2=\Gamma_{22}^1=0.$$
\end{example}
\begin{definition}
Let $(A, \cdot,\rho, \phi)$ be a multiplicative Hom-$\rho$-commutative algebra and $(\rho\text{-}DerA,[\cdot,\cdot]_{\rho},\rho,\phi_A)$ be a multiplicative Hom-$\rho$-Lie algebra, where $\rho\text{-}DerA$ is the set of all $\rho$-derivation on Hom-$\rho$-commutative algebra $A$. For a Levi-Civita connection $\nabla$, we define the covariant derivation of $R$ as follows
\begin{align}\label{key}
(\nabla_ZR)(X, Y ) &= \nabla_{\phi_A^2(Z)}R(X, Y )(\cdot) - R(\nabla_ZX, \phi_A(Y) )\phi_A(\cdot)\\
& - \rho(Z, X)R(\phi_A(X),\nabla_ZY )\phi_A(\cdot)-\rho(Z, X + Y )R(\phi_A(X), \phi_A(Y))\nabla_Z(\cdot).\nonumber
\end{align}
\end{definition}
\begin{lemma}
For $X,Y,Z,V,W\in\rho\text{-}DerA$, the curvature $R$ satisfies the following equalities
\begin{enumerate}
\item[(a)]
$\rho(X,Y)R_{YZ}X + \rho(Y,Z)R_{ZX}Y + \rho(Z,X)R_{XY}Z = 0\;(Bianchi 1),$
\item[(b)]
\text{Second Bianchi identity for a torsion-free connection}\\
$\rho(V,X)R(X, Y, V,W) + \rho(Y,V)R(V,X, Y,W) + \rho(X,Y)R(Y, V,X,W) = 0,$
\item[(c)]
If $\nabla_{\phi_A(X)}\phi_A(Y)=\phi_A(\nabla_XY)$, then\\
$\rho(Y,Z)(\nabla_ZR)(X,Y) + \rho(X,Y)(\nabla_Y R)(Z,X) +\rho(Z,X)(\nabla_XR)(Y,Z) = 0$,
\end{enumerate}
where $R(X, Y, V,W) := g(R_{XY}V,W)$.
\end{lemma}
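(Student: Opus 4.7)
The three identities are closely linked: (a) is the algebraic first Bianchi identity, (b) follows almost immediately from (a) by pairing with $g$, and (c) is the differential Bianchi identity whose proof parallels the structure of (a) with extra covariant derivatives.

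For (a) I would expand each of the three cyclic terms using the definition
$R(U,V)W = \nabla_{\phi_A(U)}\nabla_V W - \rho(U,V)\nabla_{\phi_A(V)}\nabla_U W - \nabla_{[U,V]_\rho}\phi_A(W)$. This produces six nested-covariant-derivative terms of the form $\nabla_{\phi_A(\cdot)}\nabla_{\cdot}\cdot$ together with three terms of the form $\nabla_{[\cdot,\cdot]_\rho}\phi_A(\cdot)$. Using the torsion-free identity $\nabla_V W - \rho(V,W)\nabla_W V = [V,W]_\rho$, each pair $\nabla_{\phi_A(U)}\nabla_V W - \rho(V,W)\nabla_{\phi_A(U)}\nabla_W V$ collapses to $\nabla_{\phi_A(U)}[V,W]_\rho$. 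Tracking $\rho$-signs, the six inner terms consolidate into a cyclic sum $\nabla_{\phi_A(X)}[Y,Z]_\rho + (\text{cyclic})$, which, combined with the three $\nabla_{[\cdot,\cdot]_\rho}\phi_A(\cdot)$ pieces, yields precisely the $\phi_A$-twisted cyclic expression that vanishes by the $\rho$-Jacobi identity for the Hom-$\rho$-Lie bracket on $\rho\text{-}DerA$.

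For (b) I would simply observe that it is (a) in disguise. Relabelling $(X,Y,Z)\mapsto(V,X,Y)$ in (a) gives
$\rho(V,X)R_{XY}V + \rho(X,Y)R_{YV}X + \rho(Y,V)R_{VX}Y = 0$, and taking $g(\cdot,W)$ yields the claimed identity via the $A$-linearity of $g$ in its first slot. So (b) is an immediate corollary of (a) and requires no further computation.

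For (c) I would insert the defining formula \eqref{key} into the cyclic sum $\sum_\mathrm{cycl}\rho(Y,Z)(\nabla_Z R)(X,Y)$ and split each $(\nabla_U R)(V,W)$ into its principal part $\nabla_{\phi_A^2(U)}R(V,W)(\cdot)$ and its three correction parts involving $R$ with a $\nabla_U$-argument. The hypothesis $\nabla_{\phi_A(X)}\phi_A(Y)=\phi_A(\nabla_X Y)$ is essential here: it lets me commute $\phi_A$ across $\nabla$ so that third-order mixed derivatives line up under cyclic permutation. Using the torsion-free condition to pair up $\nabla_\cdot\nabla_\cdot$ expressions with their $\rho$-transposes and then invoking the definition of $R$ in reverse, the principal parts reassemble into a $[U,[V,W]_\rho]_\rho$-type cyclic sum which vanishes by $\rho$-Jacobi, while the correction terms cancel pairwise by the $\rho$-antisymmetry of $R$ from part (a). The main obstacle throughout is the bookkeeping of $\rho$-factors: every transposition of two homogeneous arguments introduces a $\rho$-sign, and in (c) these signs are compounded by $\phi_A^2$ and by the nested brackets. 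The structural collapses described above are forced by the definitions, but aligning six or more $\rho$-weights per group so that the $\rho$-Jacobi identity applies in the exact form stated in the definition of a Hom-$\rho$-Lie algebra is where the real labour lies.
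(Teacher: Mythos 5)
Your proposal is correct and follows essentially the same route as the paper: for (a) the same expansion of the three cyclic curvature terms, collapse of the second-derivative pairs via torsion-freeness, and appeal to the Hom-$\rho$-Jacobi identity for $[\cdot,\cdot]_{\rho}$ on $\rho\text{-}DerA$; for (b) the same observation that it is just (a), relabelled and contracted with $g$; and for (c) the paper likewise only asserts a direct calculation using $\nabla_{\phi_A(X)}\phi_A(Y)=\phi_A(\nabla_XY)$, with which your outline is consistent. One minor slip: the antisymmetry you invoke when cancelling the correction terms in (c) is the earlier property $R(X,Y)=-\rho(X,Y)R(Y,X)$, not part (a) of the present lemma.
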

\begin{proof}
a) By the definition of curvature $R$, we can find the following relations
\begin{align}\label{nnn}
 \rho(Y,Z)R_{ZX}Y&=\rho(Y,Z)\{\nabla_{\phi_A(Z)}\nabla_XY-\rho(Z,X)\nabla_{\phi_A(X)}\nabla_ZY-\nabla_{[Z,X]_{\rho}}\phi_A(Y)\}\nonumber\\
&= \rho(Y,Z)\nabla_{\phi_A(Z)}\nabla_XY-\rho(Y,Z)\rho(Z,X)\rho(Z,Y)\nabla_{\phi_A(X)}\nabla_YZ\nonumber\\
&\ \ \ -\rho(Y,Z)\rho(Z,X)\nabla_{\phi_A(X)}[Z,Y]_{\rho}-\rho(Y,Z)\rho(Z,Y)\rho(X,Y)\nabla_{\phi(Y)}[Z,X]_{\rho}\nonumber\\
&\ \ \ -\rho(Y,Z)[{[Z,X]_{\rho}},\phi_A(Y)]_{\rho},
\end{align}
\begin{align}\label{mmm}
 \rho(X,Y)R_{YZ}X&=\rho(X,Y)\{\nabla_{\phi_A(Y)}\nabla_ZX-\rho(Y,Z)\nabla_{\phi_A(Z)}\nabla_YX-\nabla_{[Y,Z]_{\rho}}\phi_A(X)\}\nonumber\\
&= \rho(X,Y)\nabla_{\phi_A(Y)}\nabla_ZX-\rho(X,Y)\rho(Y,Z)\rho(Y,X)\nabla_{\phi_A(Z)}\nabla_XY\nonumber\\
&\ \ \ -\rho(X,Y)\rho(Y,Z)\nabla_{\phi_A(Z)}[Y,X]_{\rho}-\rho(X,Y)\rho(Y,X)\rho(Z,X)\nabla_{\phi_A(X)}[Y,Z]_{\rho}\nonumber\\
&\ \ \ -\rho(X,Y)[{[Y,Z]_{\rho}},\phi_A(X)]_{\rho},
\end{align}
and
\begin{align}\label{bbb}
 \rho(Z,X)R_{XY}Z&=\rho(Z,X)\{\nabla_{\phi_A(X)}\nabla_YZ-\rho(X,Y)\nabla_{\phi_A(Y)}\nabla_XZ-\nabla_{[X,Y]_{\rho}}\phi_A(Z)\}\nonumber\\
&= \rho(Z,X)\nabla_{\phi_A(X)}\nabla_YZ-\rho(Z,X)\rho(X,Y)\rho(X,Z)\nabla_{\phi_A(Y)}\nabla_ZX\nonumber\\
&\ \ \ -\rho(Z,X)\rho(X,Y)\nabla_{\phi_A(Y)}[X,Z]_{\rho}-\rho(Z,X)\rho(X,Z)\rho(Y,Z)\nabla_{\phi_A(Z)}[X,Y]_{\rho}\nonumber\\
&\ \ \ -\rho(Z,X)[{[X,Y]_{\rho}},\phi_A(Z)]_{\rho}.
\end{align}
Summing the relations \eqref{nnn}, \eqref{mmm} and \eqref{bbb}, imply
$$\rho(X,Y)R_{YZ}X + \rho(Y,Z)R_{ZX}Y + \rho(Z,X)R_{XY}Z = 0.$$
The second Bianchi identity follows immediately from the first and 
the relation (c) follows from a direct calculation by applying the relation $\nabla_{\phi_A(X)}\phi_A(Y)=\phi_A(\nabla_XY)$.
\end{proof}	

Based on our knowledge of Riemannian geometry the following properties hold for Riemannian curvature tensor $R(X,Y,Z,W)$
\begin{enumerate}
\item[(1)] 
$R$ is skew-symmetric in the first two and last two entries
\begin{center}
$\ \ \ R(X, Y,Z,W) = -R(Y,X,Z,W) = -R(X,Y,W,Z)$.
\end{center}
\item[(2)] $R$ is symmetric between the first two and last two entries
\begin{center}
$R(X, Y,Z,W) = R(Z,W,X, Y )$.
\end{center}
\end{enumerate}	
Also, in \cite{N}, Ngakeu presented that the curvature tensor $R(X,Y,Z,W)$ on $\rho$-commutative algebras has the same properties of Riemannian geometry, that is
\begin{enumerate}
\item[(i)]
$R(X,Y,V,W) = -\rho(X,Y)R(Y,X,V,W) = -\rho(V,W)R(X,Y,W,V),$
\item[(ii)]
$R(X,Y,V,W) = \rho(X+Y,V+W)R(V,W,X,Y).$
\end{enumerate}
For the curvature tensor $R(X,Y,V,W)$ on Hom-$\rho$-commutative algebras necessarily, we can not have the following properties
\[R(X,Y,V,W)=\rho(X+Y, V+W)R(V,W,X,Y),\]
\[R(X,Y,V,W) = -\rho(X,Y)R(Y,X,V,W) = -\rho(V,W)R(X,Y,W,V).\]
\section{Differential calculus, Poisson bracket}
In this section, we recall the notions of representation, cochains and Hom-cochains on Hom-$\rho$-Lie algebras. These notions are defined analogously of the classical case by K. Abdaoui, F. Ammar and A. Makhlouf in \cite{AAM}. Then, we try to develop differential calculus by using them and recall the notion of Poisson bracket on Hom-$\rho$-Lie algebras and investigate some examples (for the classical case see \cite{Y1}, \cite{N11}).

\begin{definition}\cite{AAM}
Let $(A,[\cdot,\cdot]_{\rho},\rho,\phi)$ be a Hom-$\rho$-Lie algebra. For any non-negative integer $k$, a $\phi^k$-$\rho$-derivation of degree $|X|$ on $A$ is a linear map $X : A \longrightarrow A$ 
such that\\
$$X\circ\phi=\phi\circ X,\quad i.e.,\quad [X,\phi]_{\rho}=0,$$\\
and
\begin{equation}
X[f,g]_{\rho}= [X(f),\phi^k(g)]_{\rho} + \rho(X,f)[\phi^k(f),X(g)]_{\rho}.
\end{equation}
\end{definition}
We denote by $\rho\text{-}Der_{\phi^k} A$ the space of all $\phi^k$-$\rho$-derivations of $A$. It is easy to see that for $X\in\rho\text{-}Der_{\phi^k} A$ and $Y\in\rho\text{-}Der_{\phi^s} A$
the $\rho$-commutator of $X,Y$, defined by $[X, Y ]_{\rho} = X\circ Y - \rho(X,Y)Y\circ X$, is
a $\phi^{k+s}$-$\rho$-derivation and 
$$\rho\text{-}Der_{\phi}A=\bigoplus_{k\geq 0} \rho\text{-}Der_{\phi^k}A,$$
is a $\rho$-Lie algebra with above bracket.
\begin{proposition}
Let $(A,[\cdot,\cdot]_{\rho},\rho,\phi)$ be a Hom-$\rho$-Lie algebra. The quadruple $(\rho$-$Der_{\phi}A,[\cdot,\cdot]_{\rho},\rho,\phi_A)$ consisting of the space $\rho$-$Der_{\phi}A$, two-cycle $\rho$, $\rho$-commutator $[X,Y]_{\rho}=X\circ Y-\rho(X,Y)Y\circ X$ and even linear map $\phi_A:\rho$-$Der_{\phi}A\longrightarrow
\rho$-$Der_{\phi}A$ given by $\phi_A(X)=X\circ \phi$
is a Hom-$\rho$-Lie algebra.
\end{proposition}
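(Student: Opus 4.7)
The plan is to verify, axiom by axiom, that $(\rho\text{-}Der_\phi A, [\cdot,\cdot]_\rho, \rho, \phi_A)$ meets the definition of an involutive multiplicative Hom-$\rho$-Lie algebra. The conditions to check are: (i) $\phi_A$ is a well-defined even linear self-map of $\rho\text{-}Der_\phi A$; (ii) $\phi_A^2 = Id$; (iii) $\rho$-antisymmetry of $[\cdot,\cdot]_\rho$; (iv) the $\phi_A$-twisted $\rho$-Jacobi identity; and (v) multiplicativity of $\phi_A$ with respect to the bracket. The closure of the bracket on $\rho\text{-}Der_\phi A$ and its respect for the $G$-grading have already been recorded in the discussion preceding the proposition.

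I would dispatch the short items first. Involutivity (ii) is immediate: $\phi_A^2(X) = X\circ\phi^2 = X$. $\rho$-antisymmetry (iii) is a one-line computation using $\rho(X,Y)\rho(Y,X)=1$ applied to $[X,Y]_\rho = X\circ Y - \rho(X,Y)Y\circ X$. For (i), given $X \in \rho\text{-}Der_{\phi^k}A$, the map $X\circ\phi$ preserves $G$-degree since $|\phi|=0$; it commutes with $\phi$ via $(X\circ\phi)\circ\phi = X\circ\phi^2 = \phi\circ X\circ\phi = \phi\circ(X\circ\phi)$; and it satisfies the $\phi^{k+1}$-twisted Leibniz rule, which I would obtain by applying the $\phi^k$-Leibniz rule of $X$ to $X(\phi([f,g]_\rho)) = X([\phi(f),\phi(g)]_\rho)$ and then using $X\circ\phi = \phi\circ X$ to intertwine each output. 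Linearity of $\phi_A$ is transparent.

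The crux of the argument is a single ``pull-$\phi$-out'' identity. Every $W \in \rho\text{-}Der_\phi A$ commutes with $\phi$ by definition, so associativity of composition together with $\phi^2 = Id$ give
\[
(X\circ\phi)\circ(Y\circ\phi) \;=\; X\circ(\phi\circ Y)\circ\phi \;=\; X\circ Y\circ\phi^2 \;=\; X\circ Y,
\]
and symmetrically for the reversed product. This collapses $[\phi_A(X),\phi_A(Y)]_\rho$ to the $\rho$-commutator $[X,Y]_\rho$, which matched against $\phi_A([X,Y]_\rho) = [X,Y]_\rho\circ\phi = \phi\circ[X,Y]_\rho$ via a final use of involutivity yields multiplicativity (v). For the twisted $\rho$-Jacobi identity (iv), the same mechanism applies at one level up: each cyclic term $[\phi_A(X),[Y,Z]_\rho]_\rho$ factors as $\phi\circ[X,[Y,Z]_\rho]_\rho$ once $\phi$ is pulled through using the commutation properties of $X, Y, Z$ with $\phi$, and the sum of the three cyclic terms then reduces to the ordinary $\rho$-Jacobi identity for the graded commutator on $\End(A)$, which holds by associativity of $\circ$.

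The main obstacle I anticipate is not conceptual but combinatorial: tracking the $\rho$-factors when the nested brackets in (iv) are expanded. Each of the three cyclic terms produces four summands whose $\rho$-coefficients depend on the degrees of $\phi_A(X)$, $Y$, $Z$, and $[Y,Z]_\rho$, and one must verify that the two-cycle property $\rho(a+b,c) = \rho(a,c)\rho(b,c)$ collapses these coefficients precisely to the pattern of the standard graded $\rho$-Jacobi identity once the common $\phi$ has been factored out. Beyond this bookkeeping, the verification is routine.
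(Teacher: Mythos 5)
Most of your checklist is fine, and your treatment of the Hom-$\rho$-Jacobi identity --- factoring one $\phi$ out of each cyclic term via $[\phi_A(X),[Y,Z]_\rho]_\rho=[X,[Y,Z]_\rho]_\rho\circ\phi$ (legitimate, since $[Y,Z]_\rho$ commutes with $\phi$ and $\phi$ is even) and reducing the cyclic sum to the ordinary $\rho$-Jacobi identity for the graded commutator in $\End(A)$ --- is sound, and tidier than the paper's term-by-term expansion. The genuine gap is in your item (v). Your pull-$\phi$-out identity correctly gives $[\phi_A(X),\phi_A(Y)]_\rho=X\circ Y-\rho(X,Y)\,Y\circ X=[X,Y]_\rho$, but multiplicativity requires this to equal $\phi_A([X,Y]_\rho)=[X,Y]_\rho\circ\phi$, and no ``final use of involutivity'' bridges the two: $\phi^2=Id$ gives $W\circ\phi\circ\phi=W$, not $W\circ\phi=W$. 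What your computation actually shows is that $\phi_A([X,Y]_\rho)$ and $[\phi_A(X),\phi_A(Y)]_\rho$ differ by a right composition with $\phi$, so they coincide only when $[X,Y]_\rho\circ(\phi-Id)=0$, e.g.\ when $\phi=Id$ or the bracket vanishes. Note also that on $\rho\text{-}Der_{\phi}A$ one has $X\circ\phi=\phi\circ X$, so the conjugation $X\mapsto\phi\circ X\circ\phi$ collapses to the identity and offers no repair; indeed, taking $A$ with trivial bracket and trivial grading, $\rho\text{-}Der_{\phi}A$ is the full commutant of an involution $\phi$, where one easily finds $X,Y$ with $[X,Y]\circ\phi\neq[X,Y]$, so the multiplicativity claim cannot be obtained along these lines without an extra hypothesis.

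For comparison with the source: the paper's own proof takes exactly your route and stumbles at exactly the same place. Its chain $\phi_A(X\circ Y)=(X\circ Y)\circ\phi=X\circ(Y\circ(\phi\circ\phi))=\cdots=(X\circ\phi)\circ(Y\circ\phi)$ silently replaces $\phi$ by $\phi\circ\phi$, i.e.\ it invokes the false identity $X\circ Y\circ\phi=X\circ Y$, and the same substitution is what makes its bracket computation $(X\circ Y)\circ\phi=(X\circ\phi)\circ(Y\circ\phi)$ appear to close. So your proposal reproduces the paper's argument faithfully --- closure, involutivity, $\rho$-antisymmetry and the Jacobi identity are all correct (your Jacobi reduction being the cleaner of the two, and your closure check under $\phi_A$ is something the paper omits, though it quietly uses multiplicativity of $\phi$ on $A$, which is part of the ambient definition of $\phi^k$-$\rho$-derivations rather than of the proposition's stated hypothesis) --- but the multiplicativity step fails in both write-ups, and as stated it is not a fixable bookkeeping issue but a missing (indeed generally false) identity.
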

\begin{proof}
We know that the composition of functions is always associative, that is if $h_1$, $h_2$ and $h_3$ are three functions with suitably chosen domains and codomains, then $h_1\circ (h_2\circ h_3)=(h_1\circ h_2)\circ h_3$. Let us use this to prove our proposition. So, we have 
\begin{align*}
\phi_A(Y)\circ(Z\circ X)&=(\phi_A(Y)\circ Z)\circ X=((Y\circ \phi)\circ Z)\circ X=(Y\circ(\phi\circ Z))\circ X\\
&=Y\circ((\phi\circ Z)\circ X)=Y\circ((Z\circ\phi)\circ X)=Y\circ(Z\circ(\phi\circ X))\\
&=Y\circ(Z\circ(X\circ\phi))=Y\circ(Z\circ\phi_A(X))=(Y\circ Z)\circ\phi_A(X).
\end{align*}
Now, by using the relation $\phi_A(Y)\circ(Z\circ X)=(Y\circ Z)\circ\phi_A(X)$ for $X,Y,Z\in Hg(\rho\text{-}Der_{\phi}A)$, we study the Jacobi-identity. Direct calculations give us
\begin{align*}
\rho(Z,X)[\phi_A(X),[Y,Z]_{\rho}]_{\rho}&=\rho(Z,X)\phi_A(X)\circ(Y\circ Z)-\rho(Z,X)\rho(X,Y+Z)(Y\circ Z)\circ\phi_A(X)\\
&\ \ \ -\rho(Z,X)\rho(Y,Z)\phi_A(X)\circ(Z\circ Y)\\
&\ \ \ +\rho(Z,X)\rho(X,Y+Z)\rho(Y,Z)(Z\circ Y)\circ \phi_A(X),
\end{align*}
\begin{align*}
\rho(X,Y)[\phi_A(Y),[Z,X]_{\rho}]_{\rho}&=\rho(X,Y)\phi_A(Y)\circ(Z\circ X)-\rho(X,Y)\rho(Y,X+Z)(Z\circ X)\circ\phi_A(Y)\\
&\ \ \ -\rho(X,Y)\rho(Z,X)\phi_A(Y)\circ(X\circ Z)\\
&\ \ \ +\rho(X,Y)\rho(Y,X+Z)\rho(Z,X)(X\circ Z)\circ \phi_A(Y),
\end{align*}
\begin{align*}
\rho(Y,Z)[\phi_A(Z),[X,Y]_{\rho}]_{\rho}&=\rho(Y,Z)\phi_A(Z)\circ(X\circ Y)-\rho(Y,Z)\rho(Z,Y+X)(X\circ Y)\circ\phi_A(Z)\\
&\ \ \ -\rho(Y,Z)\rho(X,Y)\phi_A(Z)\circ(Y\circ X)\\
&\ \ \ +\rho(Y,Z)\rho(Z,Y+X)\rho(X,Y)(Y\circ X)\circ \phi_A(Z).
\end{align*}
In the end, summing three above equations imply the Jacobi identity.
\end{proof}
\begin{definition}\cite{AAM}
Let $V$ be a vector space. A linear map $\mu:A\longrightarrow {\rm End}(V)$ is called a representation of the Hom-$\rho$-Lie algebra $(A,[\cdot,\cdot]_{\rho},\rho,\phi)$	on $V$ with respect to $B\in {\rm End}(V)$ if the following equality is satisfied
\[
\mu[f,g]_{\rho}\circ B=\mu(\phi(f))\circ\mu(g)-\rho(f,g)\mu(\phi(g))\circ\mu(f).
\]
Moreover, a representation $(V, \mu)$ is said to be graded if $V = \bigoplus_{a\in G}V_a$ is a $G$-graded space such that
$$\mu(f)(V_a)\subseteq V_{|f|+a},$$
for all the homogeneous elements $f\in A$ and $a\in G$.
\end{definition}
Let $V$ be a $G$-graded vector space and $B:V\longrightarrow V$ be an even homomorphism.
\begin{definition}\cite{Y1}
A $k$-cochain on a Hom-$\rho$-Lie algebra $(A,[\cdot,\cdot]_{\rho},\rho,\phi)$ is a $\rho$-skew-symmetric and $k$-linear map $\alpha:A\times\cdots\times A\longrightarrow V$ of $G$-degree $|\alpha|$, in the sense of
\begin{align*}
\alpha(f_1,\cdots,f_k)\subset V_{|f_1|+\cdots+|f_k|+|\alpha|},
\end{align*}
where $f_1,\cdots,f_k\in Hg(A)$. We denote by $C^k(A;V)$ the set of $k$-cochains on $A$.

$\alpha\in C^k(A;V)$ is called a $k$-Hom-cochain on $A$ if for $f_1,\cdots,f_k\in Hg(A)$, the following relation holds
$$B(\alpha(f_1,\cdots,f_k))=\alpha(\phi(f_1),\cdots,\phi(f_k)).$$
Let $C^k_\phi(A,V)$ denotes the set of $k$-Hom-cochains on $A$. Then $C^k_\phi(A,V)$ is a graded algebra with $C^0_{\phi}(A,V)=V$ and we have
$$C_{\phi}(A,V)=\bigoplus_{k\geq 0}C^k_{\phi}(A,V).$$
\end{definition}
In the next, let $(A, [\cdot,\cdot]_{\rho},\rho, \phi)$ be a Hom-$\rho$-Lie algebra and $(\rho\text{-}Der_{\phi}A,[\cdot,\cdot]_{\rho},\rho,\phi_A)$ be Hom-$\rho$-Lie algebra of all $\phi^k$-$\rho$-derivations on Hom-$\rho$-Lie algebra $A$, where $\rho\text{-}Der_{\phi}A$ is equipped with the representation $\mu_A$ on $A$ $(\mu_A:\rho\text{-}Der_{\phi}A\longrightarrow {\rm End}(A))$ with respect to $B=Id_A:A\longrightarrow A$.\\
We intend to define some operators on the set of $k$-Hom-cochains  
$$C^k_{\phi_A}(\rho\text{-}Der_{\phi}A, A)=\{\alpha\in C^k(\rho\text{-}Der_{\phi}A, A):\quad \alpha\circ\phi_A=\alpha\},$$
on $\rho\text{-}Der_{\phi}A$.

Now, we define the co-boundary operator $d_A:C^k_{\phi_A}(\rho\text{-}Der_{\phi}A, A)\longrightarrow C^{k+1}_{\phi_A}(\rho\text{-}Der_{\phi}A, A)$ by
$$d_Af(X)=\mu_A(\phi_A^{-1}(X))\cdot f,\quad f\in A,$$
and 
\begin{align}\label{8}
d_A\alpha(X_1,\cdots,X_{k+1})=:&\sum^{k+1}_{j=1}(-1)^{j-1}\rho(\sum^{j-1}_{i=1}X_i ,X_j)\mu_A(\phi_A^{k-1}(X_j)) \cdot\alpha(X_1, \cdots , \widehat{X_j} , \cdots ,X_{k+1})\\
&\ \ \ +\sum_{1\leqslant j<l \leqslant k+1}(-1)^{j+l}\rho(\sum^{j-1}_{i=1}X_i , X_j)\rho(\sum^{j-1}_{i=1} X_i , X_l)\nonumber\\
&\times\rho(\sum^{l-1}_{i=j+1}X_i , X_l)\alpha([X_j ,X_l]_{\rho}, \phi_A(X_1),\cdots, \widehat{\phi_A(X_j)} ,\cdots, \widehat{\phi_A(X_l)},\cdots ,\phi_A(X_{k+1})),\nonumber
\end{align}
for $k\geq 1$, $\alpha\in C^k_{\phi_A}(\rho\text{-}Der_{\phi}A, A)$ and $X_l\in Hg(\rho\text{-}Der_{\phi}A),~~~ l\in \{1,\cdots,k+1\}$, where 
$\widehat{X_j}$ means that $X_j$ is omitted. Note that $|d_A\alpha|=|\alpha|$ and $d^2_A=0$ (the condition $d_A^2=0$ does not follow if the condition $\alpha\circ\phi_A=\alpha$ is omitted, so it is necessary to define the differential operators on $k$-Hom-cochains).\\

The inner and Lie derivations also are defined on $C_{\phi_A}(\rho\text{-}Der_{\phi}A, A)$ by
\begin{align*}
i_X \alpha(X_1,\cdots ,X_{k-1}) :& = \rho(\sum^{k-1}_{i=1}X_i , X)\alpha(X,X_1,\cdots ,X_{k-1}),\quad i_X(f)=0,\\
L_X=i_X\circ d_A+d_A\circ i_X,
\end{align*}
where $X_l\in Hg(\rho\text{-}Der_{\phi}A),~~~ l\in \{1,\cdots,k\}$. 
Note that $ |i_X| = |L_X| = |X|$.
\begin{remark}
For the inner and Lie derivations $i_X$ and $L_X$, we have $i_X\circ i_Y +\rho(X,Y) i_Y\circ i_X=0$ and $d\circ L_X=L_X\circ d$, where $X,Y\in Hg(\rho\text{-}Der_{\phi}A)$, But the property $[L_X, i_Y]=i_{[X,Y]}$ not necessarily holds. For instance, if we consider the extended-hyperplane $A^2_q$ introduced in Example \ref{12}, if we set $a=b=1$ then $(A^2_q,\cdot,\rho,\phi=Id, [\cdot, \cdot]_{\rho}=0)$ is a Hom-$\rho$-Lie algebra. For $\phi=Id$ and $k=0$, $\frac{\partial}{\partial x}$ and $\frac{\partial}{\partial y}$ are $\phi^0$-$\rho$-derivation. Let us set $\mu_{A^2_q}=ad$ and consider $\phi_{A^2_q}(\frac{\partial}{\partial x})=-\frac{\partial}{\partial x}$ and $\phi_{A^2_q}(\frac{\partial}{\partial y})=-\frac{\partial}{\partial y}$. Thus for $X=\frac{\partial}{\partial x}$, $Y=\frac{\partial}{\partial y}$, $Z=\frac{\partial}{\partial x}$ we have
\begin{flalign*}
[i_X\circ d+d\circ i_X, i_Y](\alpha)(Z)&=([i_X\circ d, i_Y]+[d\circ i_X, i_Y])(\alpha)(Z)&\\
&=(1-q)[\frac{\partial}{\partial x}, \alpha(\frac{\partial}{\partial y},\frac{\partial}{\partial x})]+2[\frac{\partial}{\partial x}, \alpha(\frac{\partial}{\partial x},\frac{\partial}{\partial y})]&\\
&\neq i_{[\frac{\partial}{\partial x}, \frac{\partial}{\partial y}]}=i_{[X,Y]}=0.
\end{flalign*}
\end{remark}
\begin{definition}
We say that Hom-$\rho$-Lie algebra $(A,[\cdot,\cdot]_{\rho},\rho,\phi)$ satisfies in the Cartan identity if the following condition holds
$$[L_X, i_Y]=i_{[X,Y]}.$$
\end{definition}
In this case, one can easily show that
$$L_{[X,Y]_{\rho}}=[L_X,L_Y]_{\rho}.$$
\begin{definition}
Let  $\Omega\in C^2_{\phi_A}(\rho\text{-}Der_{\phi}A,A)$ 
is called a sympelectic structure on 
$A$
if  $\Omega$ is non-degenerate and closed ($d_A\Omega=0$).
\end{definition}
Let us define 
$\tilde{\Omega} :\rho\text{-}Der_{\phi}A\longrightarrow C^1_{\phi_A}(\rho\text{-}Der_{\phi}A,A)$ by  $\tilde{\Omega}(X)=\Omega(.,\phi_A(X)).$ Then the nondegenerate property of $\Omega$ is equivalent to the assertion that $\tilde{\Omega}$ is isomorphism (note that, the definition  of non degeneracy creates a one-to-one correspondence between $\rho\text{-}Der_{\phi}A$ and $C^1_{\phi_A}(\rho\text{-}Der_{\phi}A,A)$ and they have tha same cardinality. Thus if they are even infinite dimensional, then this isometric is meaningful).
It is remarkable that $\Omega$
is homogeneous iff $\tilde{\Omega}$
is homogeneous and we have $\vert \Omega\vert =\vert\tilde{\Omega}\vert.$
\begin{definition}
 $X\in\rho\text{-}Der_{\phi}A$ is called a locally Hamiltonian $\phi^k$-$\rho$-derivation if $L_{\phi_A(X)}\Omega=0$.
\end{definition}
\begin{lemma}
	Let $(\rho\text{-}Der_{\phi}A,[\cdot,\cdot]_{\rho},\rho,\phi_A)$ satisfies the Catran identity.
$X\in\rho\text{-}Der_{\phi}A$ is locally Hamiltonian if and only if $d_A(i_{\phi_A(X)}\Omega)=0$.
\end{lemma}
\begin{proof}
By $L_{\phi_A(X)}=d_A\circ i_{\phi_A(X)}+
i_{\phi_A(X)}\circ d_A$ the proof is clear.
\end{proof}
By the relation  $L_{\phi_A[X,Y]_{\rho}}=[L_{\phi_A(X)},L_{\phi_A(Y)}]_{\rho}$, we can show that if $X,Y\in\rho\text{-}Der_{\phi}A$ are locally Hamiltonian, then $[X,Y]_{\rho}$ is also locally Hamiltonian.
\begin{definition}
For any $f \in A$, the vector
 $X :=\tilde{\Omega}^{-1}(d_Af)$ is called the Hamiltonian $\phi^k$-$\rho$-derivation associated to $f$.
 \end{definition}
 Let us $X_f$ denotes the Hamiltonian $\phi^k$-$\rho$-derivation associated to $f$, i.e., $X=X_f$. So $X_f$ is of $G$-degree $|X_f|=|f|-|\Omega|$
and
$$ d_Af =\Omega(., \phi_A(X_f))= -i_{\phi_A(X_f)}\Omega.$$
In other words
 $$\Omega(\phi_A(X_g) ,\phi_A(X_f)) = \mu_A(X_g)\cdot f.$$
 Note that, since $\Omega$ is a $2$-Hom-cochain, then 
 $$\Omega(\phi_A(X) ,\phi_A(Y))=\Omega(X,Y),$$
 and so
 $$\Omega(X_g,X_f)=\mu_A(X_g)\cdot f.$$
Let the following two relations exist between
the maps $\phi$ and $\phi_A$ and $\mu_A(X)\in End(A)$
\begin{align}
\mu_A(\phi_A(X_f))&=\mu_A(X_{\phi(f)}),\label{15}\\
\mu_A(\phi_A(X_f))\cdot[g,h]_{\rho}&=[\mu_A(X_f)\cdot g,\phi(h)]_{\rho}+\rho(X_f,g)[\phi(g),\mu_A(X_f)\cdot h]_{\rho}.\label{16}
\end{align}
\begin{lemma}\label{14}
Let $(\rho\text{-}Der_{\phi}A,[\cdot,\cdot]_{\rho},\rho,\phi_A)$ satisfies the Catran identity. If $X,Y\in \rho\text{-}Der_{\phi}A$ are locally Hamiltonians $\phi^{k}$-$\rho$-derivation and $\phi^{s}$-$\rho$-derivation respectively, then the following relation holds
$$[X,Y]_{\rho}=X_{\Omega(X,Y)}=-\rho(X,Y)X_{\Omega(Y,X)},$$
i.e., $[X,Y]_{\rho}$ is the Hamiltonian $\phi^{k+s}$-$\rho$-derivation associated to $\Omega(X,Y)$.
\end{lemma}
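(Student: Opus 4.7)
The plan is to exploit the Cartan calculus on $C_{\phi_A}(\rho\text{-}Der_{\phi}A, A)$ together with the fact that locally Hamiltonian derivations satisfy $L_{\phi_A(X)}\Omega=0$ and the closedness $d_{\mu}\Omega=0$, in order to compute $i_{\phi_A([X,Y]_\rho)}\Omega$ explicitly and then compare it with the defining relation $i_{\phi_A(X_f)}\Omega=-d_{\mu}f$. The punchline is to show that $d_{\mu}\Omega(X,Y)=i_{\phi_A([X,Y]_\rho)}\Omega$, from which the statement follows by non-degeneracy of $\Omega$.

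First, I would establish the auxiliary identity $i_{\phi_A(X)}\,i_{\phi_A(Y)}\,\Omega=\Omega(X,Y)$. Unfolding the definition of $i_Z$ yields
$$i_{\phi_A(X)}\,i_{\phi_A(Y)}\,\Omega=\rho(X,Y)\,\Omega(\phi_A(Y),\phi_A(X)),$$
and the involutive symplectic condition $\Omega(\phi_A(U),\phi_A(V))=-\Omega(U,V)$ combined with the $\rho$-skew-symmetry $\Omega(Y,X)=-\rho(Y,X)\Omega(X,Y)$ collapses this to $\Omega(X,Y)$.

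Next, I would apply $d_{\mu}$ to both sides and use the graded Cartan identity (the one invoked in the previous lemma) to obtain
$$d_{\mu}\Omega(X,Y)=d_{\mu}\bigl(i_{\phi_A(X)}i_{\phi_A(Y)}\Omega\bigr)=L_{\phi_A(X)}\bigl(i_{\phi_A(Y)}\Omega\bigr)-i_{\phi_A(X)}\bigl(d_{\mu}i_{\phi_A(Y)}\Omega\bigr).$$
Because $Y$ is locally Hamiltonian and $\Omega$ is closed, $d_{\mu}i_{\phi_A(Y)}\Omega=L_{\phi_A(Y)}\Omega-i_{\phi_A(Y)}d_{\mu}\Omega=0$, so only the first summand survives. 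Then, using the graded commutation rule
$$[L_{\phi_A(X)},i_{\phi_A(Y)}]_\rho=i_{[\phi_A(X),\phi_A(Y)]_\rho}=i_{\phi_A([X,Y]_\rho)}$$
(the second equality by multiplicativity of $\phi_A$) together with $L_{\phi_A(X)}\Omega=0$, I get $L_{\phi_A(X)}(i_{\phi_A(Y)}\Omega)=i_{\phi_A([X,Y]_\rho)}\Omega$. Altogether $d_{\mu}\Omega(X,Y)=i_{\phi_A([X,Y]_\rho)}\Omega$.

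Finally, combining with the basic relation $i_{\phi_A(X_f)}\Omega=-d_{\mu}f$ (established just above the statement from the $\rho$-skew-symmetry of $\Omega$ and the definition of $X_f$), I obtain $i_{\phi_A([X,Y]_\rho)}\Omega=-i_{\phi_A(X_{\Omega(X,Y)})}\Omega$. Non-degeneracy of $\Omega$ (injectivity of $Z\mapsto i_Z\Omega$) together with the bijectivity of $\phi_A$ on the involutive algebra yields $[X,Y]_\rho=-X_{\Omega(X,Y)}$. The second equality $-X_{\Omega(X,Y)}=\rho(X,Y)X_{\Omega(Y,X)}$ is immediate from $\Omega(X,Y)=-\rho(X,Y)\Omega(Y,X)$ and the scalar-linearity of $f\mapsto X_f$. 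The degree claim is automatic: since $X\in\rho\text{-}Der_{\phi^k}A$ and $Y\in\rho\text{-}Der_{\phi^s}A$ imply $[X,Y]_\rho\in\rho\text{-}Der_{\phi^{k+s}}A$ by the proposition on direct sum decomposition, $[X,Y]_\rho$ is indeed the Hamiltonian $\phi^{k+s}$-$\rho$-derivation associated to $\Omega(X,Y)$.

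The main obstacle is the careful verification of the two graded Cartan-type identities, namely $L_{\phi_A(X)}=d_{\mu}\,i_{\phi_A(X)}+i_{\phi_A(X)}\,d_{\mu}$ and $[L_{\phi_A(X)},i_{\phi_A(Y)}]_\rho=i_{\phi_A([X,Y]_\rho)}$, within the Hom-$\rho$-framework; they are used tacitly but require bookkeeping of the $\rho$-factors and the shifts by powers of $\phi_A$ built into the definitions of $d_{\mu}$, $i_X$, and $L_X$. Once those are granted, the remainder is straightforward symbolic manipulation.
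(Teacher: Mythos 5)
Your proposal is correct and is essentially the paper's own argument read in the opposite direction: you use exactly the same ingredients (the Cartan identity $L_{\phi_A(X)}=d_{\mu}i_{\phi_A(X)}+i_{\phi_A(X)}d_{\mu}$, the relation $i_{\phi_A[X,Y]_{\rho}}=[L_{\phi_A(X)},i_{\phi_A(Y)}]_{\rho}$, the two locally Hamiltonian hypotheses, involutivity, and non-degeneracy) to identify $i_{\phi_A([X,Y]_{\rho})}\Omega$ with $d_{\mu}(\Omega(X,Y))=-i_{\phi_A(X_{\Omega(X,Y)})}\Omega$. No substantive difference from the paper's proof.
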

\begin{proof}
By the equality $i_{[X,Y]_{\rho}}=[L_X,i_Y]_{\rho}$ and equivalently  $i_{\phi_A[X,Y]_{\rho}}=[L_{\phi_A(X)},i_{\phi_A(Y)}]_{\rho}$, we have
\begin{align*}
i_{\phi_A[X,Y]_{\rho}}\Omega=[L_{\phi_A(X)},i_{\phi_A(Y)}]_{\rho}\Omega&=L_{\phi_A(X)}(i_{\phi_A(Y)}\Omega) -\rho(X,Y)i_{\phi_A(Y)}(L_{\phi_A(X)}\Omega).
\end{align*}
Since $X$ is a locally Hamiltonian $\phi^k$-$\rho$-derivation, we get
\begin{align*}
i_{\phi_A[X,Y]_{\rho}}\Omega=[L_{\phi_A(X)},i_{\phi_A(Y)}]_{\rho}\Omega&=L_{\phi_A(X)}(i_{\phi_A(Y)}\Omega).
\end{align*}
In the next, by the Cartan identity and given that the $\phi^s$-$\rho$-derivation $Y$ is locally Hamiltonian, we easily obtain the following relation
\begin{align*}
i_{\phi_A[X,Y]_{\rho}}\Omega=\rho(X,Y)d_A(\Omega(\phi_A(Y),\phi_A(X))=-d_A(\Omega(X,Y)),
\end{align*}
and so
\[
-i_{\phi_A[X,Y]_{\rho}}\Omega=d_A(\Omega(X,Y)).
\]
Thus, the conclusion is held, that is
\[
[X,Y]_{\rho}=X_{\Omega(X,Y)}=-\rho(X,Y)X_{\Omega(Y,X)}.
\]
\end{proof}
\begin{definition}
\cite{IIB} A Poisson Hom-$\rho$-algebra consists of a $G$-graded vector space $A$, bilinear maps $\cdot: A \times A \longrightarrow A $ and $\lbrace \cdot,\cdot\rbrace_{\rho} : A \times A \longrightarrow A$ of $G$-degree $|\{\cdot,\cdot\}_{\rho}|=P$, an even linear map $\phi : A \longrightarrow A$ and a two-cycle 
$\rho:G\times G\longrightarrow k^{\star}$ such that\\
(1) $(A,\cdot,\rho,\phi)$ is a Hom-associative $\rho$-algebra.\\
(2) $(A,\lbrace ·,·\rbrace_{\rho},\rho,\phi)$ is a Hom-$\rho$-Lie algebra, i.e.,
\begin{enumerate}
\item[(i)]
$| \{f,g\}_{\rho} | =P+|f|+|g|,$
\item[(ii)]
$\{f,g\}_{\rho}=-\rho(f,g)\{g,f\}_{\rho},$
\item[(iii)]
$\rho(h,f)\{\phi(f),\{g,h\}_{\rho}\}_{\rho}+\rho(g,h)\{\phi(h),\{f,g\}_{\rho}\}_{\rho}+\rho(f,g)\{\phi(g),\{h,f\}_{\rho}\}_{\rho}=0.$
\end{enumerate}
(3) For all $f,g,h\in A$, $ \lbrace f\cdot g,\phi(h)\rbrace_{\rho} = \rho(g,h+P)\lbrace f,h\rbrace_{\rho}\cdot\phi(g)+ \phi(f)\cdot\lbrace
g,h\rbrace_{\rho}.$\\
Furthermore, if $fg=\rho(f,g) gf$ for all $f,g\in Hg(A)$, then we have a Poisson Hom-$\rho$-commutative algebra.
\end{definition}
Equivalently, Poisson Hom-$\rho$-Lie algebra can be defined in the following expression:
\begin{definition}
A Poisson Hom-$\rho$-Lie algebra is a multiplex $(A,[\cdot,\cdot]_{\rho}, \lbrace\cdot,\cdot\rbrace_{\rho},\rho, \phi)$ consisting of a $G$-graded vector space $A$,
bilinear maps $[\cdot,\cdot]_{\rho} : A \times A \longrightarrow A $ and $\lbrace \cdot,\cdot\rbrace_{\rho} : A \times A \longrightarrow A$ of $G$-degree $|\{\cdot,\cdot\}_{\rho}|=P$, an even linear map $\phi : A \longrightarrow A$ and a two-cycle $\rho:G\times G\longrightarrow k^{\star}$ satisfying\\
(1) $(A, [\cdot,\cdot]_{\rho},\rho, \phi)$ is a Hom-$\rho$-Lie algebra.\\
(2) $(A, \lbrace\cdot,\cdot\rbrace_{\rho},\rho, \phi)$ is a Hom-$\rho$-Lie algebra.\\
(3) For all $f,g,h\in A$, $ \lbrace[f,g]_{\rho},\phi(h)\rbrace_{\rho} = \rho(g,h+P)[\lbrace f,h\rbrace_{\rho}, \phi(g)]_{\rho} + [\phi(f), \lbrace
g,h\rbrace_{\rho}]_{\rho}.$
\end{definition}
\begin{example}
Let $(A, \cdot, \lbrace\cdot,\cdot\rbrace_{\rho},\rho, \phi)$ be a Poisson Hom-$\rho$-algebra. We define the bracket $[\cdot,\cdot]:A\times A\longrightarrow A$ by $[f,g]=f\cdot g-\rho(f,g)g\cdot f$. In this case
$(A,[\cdot,\cdot]_{\rho}, \cdot, \lbrace\cdot,\cdot\rbrace_{\rho},\rho, \phi)$ is a Poisson Hom-$\rho$-Lie algebra.\\
It is enough to check that 
$ \lbrace[f,g]_{\rho},\phi(h)\rbrace_{\rho} = \rho(g,h+P)[\lbrace f,h\rbrace_{\rho}, \phi(g)]_{\rho} + [\phi(f), \lbrace
g,h\rbrace_{\rho}]_{\rho}.$ For this, we have
\begin{align*}
\lbrace[f,g]_{\rho},\phi(h)\rbrace_{\rho}&=\{f\cdot g, \phi(h)\}_{\rho}-\rho(f,g)\{g\cdot f,\phi(h)\}_{\rho},
\end{align*}
On the other hand, since $(A, \cdot, \lbrace\cdot,\cdot\rbrace_{\rho},\rho, \phi)$ is a Poisson Hom-$\rho$-algebra, then
\[
\lbrace f\cdot g,\phi(h)\rbrace_{\rho} = \rho(g,h+P)\lbrace f,h\rbrace_{\rho}\cdot\phi(g)+ \phi(f)\cdot\lbrace g,h\rbrace_{\rho},
\]
and 
\[
\lbrace g\cdot f,\phi(h)\rbrace_{\rho} = \rho(f,h+P)\lbrace g,h\rbrace_{\rho}\cdot\phi(f)+ \phi(g)\cdot\lbrace f,h\rbrace_{\rho}.
\]
Therefore
\begin{align*}
\lbrace[f,g]_{\rho},\phi(h)\rbrace_{\rho}&=\rho(g,h+P)\lbrace f,h\rbrace_{\rho}\cdot\phi(g)+ \phi(f)\cdot\lbrace g,h\rbrace_{\rho}\\
&\ \ \ -\rho(f,g)\{\rho(f,h+P)\lbrace g,h\rbrace_{\rho}\cdot\phi(f)+ \phi(g)\cdot\lbrace f,h\rbrace_{\rho}\}\\
&=\rho(g,h+P)[\{f,h\}_{\rho},\phi(g)]_{\rho}+[\phi(f),\{g,h\}_{\rho}]_{\rho}.
\end{align*}
\end{example}
\begin{theorem}
Let $(A,[\cdot,\cdot]_{\rho},\rho,\phi)$ be a Hom-$\rho$-Lie algebra and $\Omega$ be the homogeneous symplectic structure. Defining the $\rho$-Poisson
bracket $\lbrace \cdot,\cdot\rbrace_{\rho}$ associated to $\Omega$ as
\begin{equation*}
\lbrace f, g\rbrace_{\rho} :=-\rho(\Omega,g)\mu_A(X_f)\cdot g =-\rho(\Omega,g)\Omega( \phi_A(X_f),\phi_A(X_g))\quad f, g \in Hg(A),
\end{equation*}
$(A,[\cdot,\cdot]_{\rho},\{\cdot,\cdot\}_{\rho},\phi)$ is a Poisson Hom-$\rho$-Lie algebra.
\end{theorem}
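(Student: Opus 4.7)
The plan is to verify the three defining conditions of a Poisson Hom-$\rho$-Lie algebra for $(A,[\cdot,\cdot]_\rho,\{\cdot,\cdot\}_\rho,\rho,\phi)$. Condition (1) holds by hypothesis, so only (2) and (3) require work. I would first settle the $G$-grading: since $|X_f|=|f|-|\Omega|$ and $\mu_A$ is graded, one has $|\{f,g\}_\rho|=|f|+|g|-|\Omega|$, so the bracket has degree $P=-|\Omega|$ (the scalar $\rho(\Omega,g)$ is of degree $0$). For the $\rho$-antisymmetry, I would start from $\{g,f\}_\rho=-\rho(\Omega,f)\,\Omega(\phi_A(X_g),\phi_A(X_f))$, apply the $\rho$-skew-symmetry of the $2$-cochain $\Omega$ to swap arguments (introducing the factor $-\rho(X_g,X_f)$), and expand $\rho(X_g,X_f)=\rho(g-\Omega,f-\Omega)$ via the bicharacter axioms. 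The prefactor $-\rho(\Omega,g)$ in the definition is arranged precisely so that the residual scalars collapse and produce $\{f,g\}_\rho=-\rho(f,g)\{g,f\}_\rho$.

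For the $\rho$-Jacobi identity I would apply the closedness $d_\mu\Omega=0$ to the triple $(\phi_A(X_f),\phi_A(X_g),\phi_A(X_h))$. Expanding via \eqref{8}, three of the six resulting summands contain the Lie-derivative action $\mu_A(\cdot)\cdot\Omega(\cdot,\cdot)$; using $d_\mu f=-i_{\phi_A(X_f)}\Omega$, these become iterated bracket expressions of the form $\{f,\{g,h\}_\rho\}_\rho$ up to scalars. The remaining three summands contain $[\phi_A(X_g),\phi_A(X_h)]_\rho$, which by Lemma~\ref{14} applied to the Hamiltonian fields equals $-X_{\Omega(\phi_A(X_g),\phi_A(X_h))}$, again yielding iterated-bracket terms. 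Collecting the six pieces using the bicharacter identities and involutivity $\phi_A^2=\mathrm{Id}$ converts $d_\mu\Omega(\phi_A(X_f),\phi_A(X_g),\phi_A(X_h))=0$ into the cyclic $\rho$-Jacobi identity for $\{\cdot,\cdot\}_\rho$.

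For condition (3), I would expand
\[
\{[f,g]_\rho,\phi(h)\}_\rho = -\rho(\Omega,\phi(h))\,\mu_A(X_{[f,g]_\rho})\cdot\phi(h),
\]
and use $\rho$-antisymmetry of $\{\cdot,\cdot\}_\rho$ to rewrite it (up to scalar factors) in the form $\mu_A(X_{\phi(h)})\cdot[f,g]_\rho$. The hypothesis \eqref{15} identifies $\mu_A(X_{\phi(h)})=\mu_A(\phi_A(X_h))$, after which \eqref{16} splits the expression into a piece $[\mu_A(X_h)\cdot f,\phi(g)]_\rho$ and a piece $[\phi(f),\mu_A(X_h)\cdot g]_\rho$. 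Reinterpreting each piece through the definition of $\{\cdot,\cdot\}_\rho$ recovers the two bracket terms on the right-hand side, and careful tracking of the $\rho(\Omega,\cdot)$ scalars supplies the prescribed prefactor $\rho(g,h+P)$ with $P=-|\Omega|$.

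The principal obstacle is the $\rho$-Jacobi step: matching the six signed, $\rho$-weighted summands produced by $d_\mu\Omega=0$ on the Hamiltonian triple against the three cyclic terms of the $\rho$-Jacobi identity requires substantial bookkeeping with bicharacter identities and depends essentially on the involutivity of $\phi_A$. The antisymmetry and compatibility reduce to shorter but similar scalar manipulations once the correct identifications (notably Lemma~\ref{14} and the hypotheses \eqref{15}, \eqref{16}) are in place.
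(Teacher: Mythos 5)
Your plan matches the paper's proof essentially step for step: the degree count giving $P=-|\Omega|$, $\rho$-antisymmetry from the $\rho$-skew-symmetry of $\Omega$ plus bicharacter bookkeeping, the Hom-Jacobi identity obtained by expanding $d_\mu\Omega=0$ on a Hamiltonian triple via \eqref{8}, and condition (3) via antisymmetry together with \eqref{15} and \eqref{16}. The only minor variation is that you treat the $\Omega([X_f,X_g]_{\rho},\phi_A(X_h))$-type terms with Lemma \ref{14}, while the paper expands them through the representation identity (its relation \eqref{9}); these encode the same fact, so the argument is essentially the paper's.
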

\begin{proof}
Since $\Omega$ is a $2$-Hom-cochain, then we have
\begin{equation}\label{7}
\lbrace f, g\rbrace_{\rho} :=-\rho(\Omega,g)\mu_A(X_f)\cdot g =-\rho(\Omega,g)\Omega( X_f,X_g)\quad f, g \in Hg(A).
\end{equation}
At first, we show that $(A,\{\cdot,\cdot\}_{\rho},\rho,\phi)$ is a Hom-$\rho$-Lie algebra. We have
$$|\{f,g\}_{\rho}|=|\{\cdot,\cdot\}_{\rho}|+|f|+|g|.$$
On the other hand, we have
$$|\Omega( X_f,X_g)|=|\Omega|+|X_f|+|X_g|=P+|f|+|g|.$$
By the definition of $\rho$-Poisson bracket, since
$|\{f,g\}_{\rho}|=|\Omega( X_f,X_g)|$, then we can find
$|\{.,.\}_{\rho}|=P=-|\Omega|$.\\
Now, we investigate the following relation
$$\{f,g\}_{\rho}=-\rho(f,g)\{g,f\}_{\rho}.$$
By the relation \eqref{7}, we have
\begin{align*}
\{f,g\}_{\rho}&=-\rho(\Omega,g)\Omega(X_f,X_g)=\rho(\Omega,g)\rho(X_f,X_g)\Omega(X_g,X_f)\\
&=\rho(\Omega,g)\rho(f,g)\rho(f,-\Omega)\rho(-\Omega,g)\Omega(X_g,X_f)\\
&=\rho(f,g)\rho(\Omega,f)\Omega(X_g,X_f)=-\rho(f,g)\{g,f\}_{\rho}.
\end{align*} 
Now, it's time to complete the proof by showing that the $\rho$-Hom-jacobi identity holds. At first, note that by Lemma \ref{14}, we have
\begin{align}\label{9}
\Omega([X_f,X_g]_{\rho},\phi_A(X_h))&=\Omega(X_{\Omega(X_f,X_g)},\phi_A(X_h))=-\rho(X_f+X_g,X_h)\Omega(\phi_A(X_h),X_{\Omega(X_f,X_g)})\\
&=-\rho(X_f+X_g,X_h)\mu_A(\phi_A(X_h))\cdot\Omega(X_f,X_g)\nonumber\\
&=-\rho(X_f+X_g,X_h)\mu_A(\phi_A(X_h))\mu_A(X_f)\cdot g.\nonumber
\end{align}
Since $\Omega$ is a close form, then by \eqref{8}, we have
\begin{align*}
0=d_A\Omega(X_f,X_g,X_h)&=\mu_A(\phi_A(X_f))\Omega(X_g,X_h)-\rho(X_f,X_g)\mu_A(\phi_A(X_g))\Omega(X_f,X_h)\\
&\ \ \ +\rho(X_f+X_g,X_h)\mu_A(\phi_A(X_h))\Omega(X_f,X_g) -\Omega([X_f,X_g]_{\rho},\phi_A(X_h))\\
&\ \ \ +\rho(X_g,X_h)\Omega([X_f,X_h]_{\rho},\phi_A(X_g))-\rho(X_f,X_g+X_h)\Omega([X_g,X_h]_{\rho},\phi_A(X_f)).
\end{align*}
This case will complete by invoking \eqref{9} and again \eqref{7}, as
\begin{align*}
0=d_A\Omega(X_f,X_g,X_h)&=-2\rho(X_g,X_h)\mu_A(\phi_A(X_f))\mu_A(X_h)\cdot g\\
&\ \ \ +2\rho(X_f,X_g+X_h)\mu_A(\phi_A(X_g))\mu_A(X_h)\cdot f\\
&\ \ \ -2\rho(X_f+X_g,X_h)\rho(X_f,X_g)\mu_A(\phi_A(X_h))\mu_A(X_g)\cdot f.
\end{align*}
By \eqref{7} again, this time vice versa, we obtain
\begin{align*}
0=d_A\Omega(X_f,X_g,X_h)
&=\rho(g,\Omega)\rho(h,2\Omega)\rho(f,h)\rho(h,f)\{\phi(f),\{g,h\}_{\rho}\}_{\rho}\\
&\ \ \ \rho(f+g,h)\rho(g,\Omega)\rho(h,2\Omega)\{\phi(h),\{f,g\}_{\rho}\}_{\rho}\\
& \ \ \ \rho(f,g+h)\rho(g,\Omega)\rho(h,2\Omega)\{\phi(g),\{h,f\}_{\rho}\}_{\rho}.
\end{align*}
With this result, we can write
$$\rho(h,f)\{\phi(f),\{g,h\}_{\rho}\}_{\rho}+\rho(g,h)\{\phi(h),\{f,g\}_{\rho}\}_{\rho}+\rho(f,g)\{\phi(g),\{h,f\}_{\rho}\}_{\rho}=0.$$
We continue with the checking of the following relation
$$\lbrace[f,g]_{\rho},\phi(h)\rbrace_{\rho} = \rho(g,h+P)[\lbrace f,h\rbrace_{\rho}, \phi(g)]_{\rho} + [\phi(f), \lbrace
h,g\rbrace_{\rho}]_{\rho}.$$
We have
\begin{align*}
\{[f,g]_{\rho},\phi(h)\}_{\rho}&=-\rho(\Omega,h)\Omega(X_{[f,g]_{\rho}},X_{\phi(h)})\\
&=\rho(\Omega,h)\rho(f+g-\Omega,h-\Omega)\Omega(X_{\phi(h)},X_{[f,g]_{\rho}}).
\end{align*}
Now, by using relation \eqref{7}, we have
\begin{align*}
\{[f,g]_{\rho},\phi(h)\}_{\rho}
&=\rho(\Omega,h)\rho(f+g-\Omega,h-\Omega)\mu_A(X_{\phi(h)})\cdot [f,g]_{\rho}.
\end{align*}
In the next, the relations \eqref{15} and \eqref{16} give us
\begin{align*}
\{[f,g]_{\rho},\phi(h)\}_{\rho}
&=\rho(f+g,h)\rho(f+g,-\Omega)[\mu_A(X_h)\cdot f,\phi(g)]_{\rho}\\
&\ \ \ +\rho(f+g,h)\rho(f+g,-\Omega)\rho(X_h,f)[\phi(f),\mu_A(X_h)\cdot g]_{\rho}\\
&=\rho(g,h-\Omega)[\{f,h\}_{\rho},\phi(g)]_{\rho}+[\phi(f),\{g,h\}_{}]_{\rho}.
\end{align*}
\end{proof}
Lemma \ref{14} implies the following
\begin{corollary}
We have
\[
[X_f,X_g]_{\rho}=X_{\Omega(X_f,X_g)}=-\rho(g,\Omega)X_{\{f,g\}_{\rho}}.
\]
\end{corollary}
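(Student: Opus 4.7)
The plan is to reduce this corollary directly to Lemma \ref{14}, applied to the Hamiltonian derivations $X_f$ and $X_g$, and then rewrite the argument of $X_{(\cdot)}$ using the definition of the $\rho$-Poisson bracket.

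First I would verify that every Hamiltonian $\phi^k$-$\rho$-derivation is locally Hamiltonian, so that Lemma \ref{14} is applicable. By definition $i_{\phi_A(X_f)}\Omega = -d_{\mu}f$, and $\Omega$ is closed, so the Cartan identity $L_{\phi_A(X_f)} = d_{\mu}\circ i_{\phi_A(X_f)} + i_{\phi_A(X_f)}\circ d_{\mu}$ together with $d_{\mu}^2 = 0$ gives $L_{\phi_A(X_f)}\Omega = -d_{\mu}^2 f = 0$. Hence $X_f$ (and similarly $X_g$) is locally Hamiltonian.

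Next I would invoke Lemma \ref{14} with $X = X_f$ and $Y = X_g$, which yields the first of the two claimed equalities,
\[
[X_f,X_g]_{\rho} = -X_{\Omega(X_f,X_g)}.
\]
For the second equality I would translate $\Omega(X_f,X_g)$ into the $\rho$-Poisson bracket using the defining formula $\{f,g\}_{\rho} = -\rho(\Omega,g)\,\Omega(X_f,X_g)$ of the preceding theorem. Inverting the scalar gives $\Omega(X_f,X_g) = -\rho(g,\Omega)\{f,g\}_{\rho}$, since $\rho(\Omega,g)\rho(g,\Omega) = 1$.

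Finally I would use that the assignment $h \mapsto X_h = \tilde{\Omega}^{-1}(d_{\mu}h)$ is $k$-linear (both $d_{\mu}$ and $\tilde{\Omega}^{-1}$ are $k$-linear maps), so a scalar factor $-\rho(g,\Omega)\in k^{\star}$ pulls out of $X_{(\cdot)}$. Combining this with the first equality, the two factors of $-1$ cancel and one obtains
\[
-X_{\Omega(X_f,X_g)} \;=\; -X_{-\rho(g,\Omega)\{f,g\}_{\rho}} \;=\; \rho(g,\Omega)\,X_{\{f,g\}_{\rho}},
\]
which completes the chain of equalities in the corollary. The only substantive point is the passage from Hamiltonian to locally Hamiltonian; everything else is a direct rewriting using the definitions already established in this section, so I do not expect any serious obstacle.
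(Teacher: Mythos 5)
Your proposal is correct and follows essentially the same route as the paper, which simply states that Lemma \ref{14} implies the corollary: apply that lemma to $X_f$, $X_g$ and rewrite $\Omega(X_f,X_g)=-\rho(g,\Omega)\{f,g\}_{\rho}$ using the defining formula of the bracket together with linearity of $h\mapsto X_h$. Your preliminary check that Hamiltonian derivations are locally Hamiltonian (via $i_{\phi_A(X_f)}\Omega=-d_{\mu}f$, $d_{\mu}^2=0$ and closedness of $\Omega$) is a detail the paper leaves implicit, and it is exactly what is needed for Lemma \ref{14} to apply.
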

The above theorem is the key to constructing Hamiltonian derivation and Poisson bracket on the specific Hom-$\rho$-commutative algebras. To illustrate an application of this lemma, let us state the following example.\\

\begin{example} 
Let's go back to the Example \ref{12}. In this example, for the extended hyperplane $A^2_q$ and space $\rho$-$DerA^2_q$, assuming that $\Omega=dy\wedge dx$ and $|f| =(f_1,f_2)$, the Hamiltonian $\phi$-$\rho$-derivation associated to $f\in A^2_q$ has the following expression
\[X_f=q^{1-f_1}\mu_{A^2_q}(\frac{\partial}{\partial y})\cdot f\frac{\partial}{\partial x}-q^{f_2}\mu_{A^2_q}(\frac{\partial}{\partial x})\cdot f\frac{\partial}{\partial y}.\]
So, the Poisson bracket corresponding to $\phi_{A^2_q}$ gives as follows
\[\{f,g\}_{\rho}=-\rho(\Omega,g)\mu_{A^2_q}(X_f)\cdot g =-\rho(x+y,g)\{\q^{1-f_1}(\mu_{A^2_q}(\frac{\partial}{\partial y})\cdot f)(\mu_{A^2_q}(\frac{\partial}{\partial x})\cdot g)-q^{f_2}(\mu_{A^2_q}(\frac{\partial}{\partial x})\cdot f)(\mu_{A^2_q}(\frac{\partial}{\partial y})\cdot g)\}.\]
\end{example}
\begin{example}
This example is intended to give us a Poisson structure on quaternion algebra $H$ (Example \ref{13}). Let us to define the Poisson bracket $\{\cdot,\cdot\}_{\rho}$ on $H$ by the following structure
$$\{i,i\}_{\rho}=0,\; \{j,j\}_{\rho}=0,\; \{k,k\}_{\rho}=0,\;\{i,j\}_{\rho}=-\{j,i\}_{\rho}=k,\; \{k,i\}_{\rho}=-\{i,k\}_{\rho}=j,\; \{j,k\}_{\rho}=-\{k,j\}_{\rho}=i.$$
So, $(H,\{\cdot,\cdot\}_{\rho},\phi_H)$ is a Poisson Hom-$\rho$-commutative algebra.
\end{example}

\bigskip \addcontentsline{toc}{section}{References}

\end{document}